\theoremstyle{plain}
\theoremstyle{definition}
\theoremstyle{remark}
\newenvironment{leftalign*}[1][\parindent]{\setlength\hangindent{#1}\start@align\tw@\st@rredtrue\m@ne}{\endalign}
\renewcommand{\phi}{\varphi}
\renewcommand{\Re}[1]{\operatorname{Re} #1 }
\newcommand{\ran}{\operatorname{ran}}
\newcommand{\tr}{\operatorname{tr}}
\let\oldenumerate=\enumerate
	\def\enumerate{
	\oldenumerate
	\setlength{\itemsep}{5pt}
	}
\let\olditemize=\itemize
	\def\itemize{
	\olditemize
	\setlength{\itemsep}{5pt}
	}
\newtheorem{Theorem}{Theorem}[section]
\newtheorem{Lemma}[Theorem]{Lemma}
\newtheorem{Corollary}[Theorem]{Corollary}
\newtheorem{Question}[Theorem]{Question}
\newtheorem{Example}[Theorem]{Example}
\newtheorem{Remark}[Theorem]{Remark}
\numberwithin{equation}{section}
\title{Describing the Numerical Range and $C$-Numerical Range of Matrices via Their Unitary Orbits and the Joukowsky Transform}
\author{Ryan O'Loughlin\\

Department of Mathematics and Statistics, \\
University of Reading \\
Reading \\
RG6 6AX \\
}
\date{}
\begin{document}

\maketitle

\begin{abstract}
We generalise the Elliptical Range Theorem to characterise the numerical range of matrices belonging to a subspace of the space of \(3 \times 3\) matrices. Using Specht's Theorem, which characterizes when two matrices are unitarily equivalent, we then provide a novel proof of the Elliptical Range Theorem. Finally, we give an explicit description of the $C$-numerical range for $2 \times 2$ matrices and for rank-one matrices.
\vskip 0.5cm
\noindent Keywords: Numerical ranges.
\vskip 0.5cm
\noindent MSC: 15A60. 
\end{abstract}

\begin{section}{Introduction}
The numerical range of a $n \times n$ matrix $A$ (also known as the field of values) is defined as 
    $$
    W(A) = \{ \langle Ax,x\rangle : x \in \mathbb{C}^n, \,  \|x\| = 1 \},
    $$
where $\langle\, , \, \rangle $ denotes the standard Euclidean inner product on $\mathbb{C}^n$.

The Elliptical Range Theorem describes the numerical range of a $ 2 \times 2$ matrix as an ellipse. In this article we adapt the typical proof of the Elliptical Range Theorem to incorporate the Joukowsky transform, which consequently generalises to give a new description for the numerical range of a class of $3 \times 3$ matrices. Then through unitary equivalence conditions (including Specht's Theorem) we give another alternate new proof of the Elliptical Range Theorem and describe the $C$-numerical range of $2 \times 2$ matrices and rank $1$ matrices.

Although numerical ranges have a long history there is no complete precise mathematical description describing their geometry. The Toeplitz-Hausdorff Theorem shows that numerical ranges are convex and there are now many proofs of the Elliptical Range Theorem, with \cite[Thm 1.3.6]{johnson1985matrix} being a classical proof and \cite{li1996simple, lakos2024short} being shorter alternatives. Although a modern standard method to analyse the numerical range is based on Kippenhahn's work \cite{kippenhahn1951wertevorrat} by computing the Kippenhanh polynomials, despite many years of progress since Kippenhanh's work this is still an involved task, and the polynomials in question often involve thousands of terms. We hope that the alternative method involving Laurent polynomials (Theorem \ref{3by3J}) presented in this article will contribute to a more practical characterisation of the numerical range, with the recovery of the Elliptical Range Theorem in the $ 2 \times 2$ case serving as a promising first step in this direction.

In a similar vein, the $C$-numerical range of a matrix (defined in Section \ref{3n}) has no complete geometric description. The $C$-numerical range of a $2 \times 2$ matrix was shown to be an ellipse in  \cite{nakazato1994c}, and a subsequent proof was given in \cite{kwong1996some}. Our new approach incorporating the Joukowsky transform shortens these proofs and in our proof we are able to explicitly identify the ellipse which is the $C$-numerical range of a given matrix. The influential "union of disks" formula was first shown for $q$-numerical ranges in \cite{tsing1984constrained}, and has led to substantial progress on study of the $q$-numerical ranges. With a novel proof, we make a multidimensional generalisation of this formula to rank 1 $C$-numerical ranges.

One particular reason why one may want to describe the geometry of the numerical range is in order to help solve Crouzeix's Conjecture. This conjectures that for each polynomial $p$ and matrix $A$, we have $\|p(A)\| \leq 2 \sup_{z \in W(A)}|p(z)|$, where $\| \, \|$ denotes the operator norm. Crouzeix's Conjecture has gained a lot of traction from the Functional and Complex Analysis communities \cite{o2024crouzeix, CC11.08, CC2matrix, CC3by3, CCgorkin, CCincreasing, CCLi, CCLi2, CCnilpotent, CCnum, malman2024double}. In particular Crouzeix`s Conjecture is unresolved for $ 3 \times 3$ matrices which have elliptical numerical ranges, which is the focus of Theorem \ref{3by3J}.

The layout of the paper is as follows. In Section \ref{2.1n} we provide a short proof of the Elliptical Range Theorem, and generalise this to $3 \times 3$ matrices in Section \ref{2.2}. In Section \ref{2.3} we provide a novel proof the Elliptical Range Theorem through the use of Specht's Theorem. Section \ref{3n} is focused on the $C$-numerical range, with Section \ref{3.1} describing the $C$-numerical range of a $2 \times 2$ matrix and Section \ref{3.2} describing the $C$-numerical range of a rank $1$ matrix.

\begin{subsection}{Notation, Terminology and Preliminaries}\label{1.1}

We use $\operatorname{tr}(A)$ to denote the trace of a matrix $A$. We use the Hilbert-Schmidt norm notation 
$$
\left\| \begin{pmatrix}
    a_{11} & a_{12} \\
    a_{21} & a_{22} 
\end{pmatrix} \right\|_{HS}^2 = |a_{11}|^2 + |a_{12}|^2 + |a_{21}|^2 + |a_{22}|^2 = \operatorname{tr}\left(\begin{pmatrix}
    a_{11} & a_{12} \\
    a_{21} & a_{22} 
\end{pmatrix}^*\begin{pmatrix}
    a_{11} & a_{12} \\
    a_{21} & a_{22} 
\end{pmatrix}\right).
$$
For a set $X\subseteq \mathbb{C}$, we write $\operatorname{conv}(X)$ to denote the convex hull of $X$.

Throughout this article, the term ellipse will refer to the convex hull of the ellipse. When we wish to consider only the boundary, this will be stated explicitly. We use the notation $\partial W(A)$ to denote the boundary of the numerical range of $A$.

For parameters $a, b \in \mathbb{R}$, we define the Joukowsky transform on the unit circle, $\mathbb{T}$, by
\[
J_{a,b}(z) = az + \frac{b}{z}.
\]
Writing $
e^{it} =  \cos (t) + i  \sin (t), \,  e^{-it} =  \cos (t) - i  \sin (t),$ one quickly sees that
\begin{equation}\label{Jtransform}
    J_{a,b}(e^{it}) = (a+b) \cos (t) + i(a-b) \sin (t).
\end{equation}

Observing the range of the Joukowsky transform as the boundary of an ellipse, allows us to present a quick proof of the Elliptical Range Theorem in Theorem \ref{2ndproof}. We then generalise this approach to describe the numerical range of $3 \times 3$ matrices in terms of unions of ranges of the Joukowsky transform (Theorem \ref{3by3J}).

A matrix $U$ is unitary if $U^* = U^{-1}$, where $U^*$ denotes the adjoint of $U$. Matrices $A$ and $B$ are unitarily equivalent if $A = U^*BU$. It is well-known that matrices which are unitarily equivalent have the same numerical range.

Our new second proof of the Elliptical Range Theorem is deduced from the following.

\begin{Theorem}{Specht's Theorem}\label{Specht}
    \begin{enumerate}
        \item Let $A$ and $B$ be $2 \times 2$ matrices. Then $A$ and $B$ are unitarily equivalent if and only if $\operatorname{tr} A=\operatorname{tr} B, \quad \operatorname{tr} A^2=\operatorname{tr} B^2,  \text { and }\operatorname{tr} A A^*=\operatorname{tr} B B^*$.
        \item Let  $A$ and $B$ be $3 \times 3$ matrices. Then $A$ and $B$ are unitarily equivalent if and only if the following trace inequalities hold \begin{align}\label{all}
    &\operatorname{tr} A=\operatorname{tr} B, \quad \operatorname{tr} A^2=\operatorname{tr} B^2, \quad \operatorname{tr} A A^*=\operatorname{tr} B B^*  \quad \operatorname{tr} A^3=\operatorname{tr} B^3 \\
    &\operatorname{tr} A^2 A^*=\operatorname{tr} B^2 B^*,  \quad \operatorname{tr} A^2\left(A^*\right)^2=\operatorname{tr} B^2\left(B^*\right)^2, \quad \\ &\operatorname{tr} A^2\left(A^*\right)^2 A A^*=\operatorname{tr} B^2\left(B^*\right)^2 B B^*.
    \end{align}
    \end{enumerate}
\end{Theorem}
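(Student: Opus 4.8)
The \emph{only if} direction is immediate and identical for both parts. If $A=U^{*}BU$ with $U$ unitary then $A^{*}=U^{*}B^{*}U$, so for any word $w(X,Y)$ in two noncommuting variables one has $w(A,A^{*})=U^{*}w(B,B^{*})U$; since the trace is unchanged under conjugation, every listed identity holds. All the content is in the \emph{if} direction. The guiding principle here is that conjugation by the compact group $U(n)$ on $\mathbb{C}^{n\times n}$ has closed orbits, so a finite family of continuous unitary invariants that together recover a canonical form is automatically a complete invariant. Accordingly my plan is, in each part, to reduce to Schur form and to show that the listed traces recover exactly the data of that canonical form.

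For part (1) every $2\times2$ matrix is unitarily equivalent to an upper triangular $T=\minimatrix{\lambda_{1}}{c}{0}{\lambda_{2}}$, so it suffices to compare two such forms. Newton's identities turn $\operatorname{tr}T=\lambda_{1}+\lambda_{2}$ and $\operatorname{tr}T^{2}=\lambda_{1}^{2}+\lambda_{2}^{2}$ into the two elementary symmetric functions of the eigenvalues, and hence determine the multiset $\{\lambda_{1},\lambda_{2}\}$. The third condition fixes the Hilbert--Schmidt norm $\operatorname{tr}TT^{*}=|\lambda_{1}|^{2}+|\lambda_{2}|^{2}+|c|^{2}$ and so determines $|c|$. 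Finally, since the Schur decomposition allows the eigenvalues to be placed on the diagonal in any prescribed order, I may assume the two diagonals coincide; conjugation by $\operatorname{diag}(1,u)$ with $|u|=1$ then rescales $c$ by $u$ and matches the phases. Thus the three invariants form a complete set when $n=2$.

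For part (2) the skeleton is the same. A Schur reduction brings $A$ to upper triangular form with diagonal $(\lambda_{1},\lambda_{2},\lambda_{3})$ and strictly upper entries $(\alpha,\beta,\gamma)=(T_{12},T_{13},T_{23})$, and $\operatorname{tr}A,\operatorname{tr}A^{2},\operatorname{tr}A^{3}$ recover the three elementary symmetric functions, hence the eigenvalue multiset, just as before. The residual unitary freedom preserving triangularity is conjugation by diagonal unitaries, which multiplies $(\alpha,\beta,\gamma)$ by the unimodular factors $(\bar u_{1}u_{2},\bar u_{1}u_{3},\bar u_{2}u_{3})$, subject to the one relation that the middle factor is the product of the other two. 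I would then show that the remaining conditions $\operatorname{tr}AA^{*},\operatorname{tr}A^{2}A^{*},\operatorname{tr}A^{2}(A^{*})^{2},\operatorname{tr}A^{2}(A^{*})^{2}AA^{*}$ determine the orbit of $(\alpha,\beta,\gamma)$ under this torus action, i.e.\ the three moduli together with the phase of the cyclic product $\alpha\gamma\overline{\beta}$.

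This last separation statement is exactly where the difficulty lies. Verifying that precisely the listed words distinguish the orbits of the off-diagonal triple --- and in particular that the degree-six word $\operatorname{tr}A^{2}(A^{*})^{2}AA^{*}$ is genuinely needed to resolve the configurations left ambiguous by the shorter words --- is a real invariant-theoretic computation, essentially the one underlying the classical Pearcy finite word lists. It is further complicated by the degenerate strata where some eigenvalues coincide or some of $\alpha,\beta,\gamma$ vanish, since there the stabiliser in $U(3)$ jumps and the separating property must be checked stratum by stratum. In the write-up I would either carry out this reduction explicitly in the Schur coordinates $(\lambda_{i};\alpha,\beta,\gamma)$ or, more economically, invoke the established Specht--Pearcy word lists and reconcile them with the normalisation adopted here.
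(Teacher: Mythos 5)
First, a point of comparison: the paper does not actually prove this theorem --- part (a) is quoted from \cite[Thm 2.2.8]{johnson1985matrix} and part (b) from \cite{djokovic2007unitarily} --- so there is no internal argument to measure you against. Your ``only if'' direction is correct, and your part (1) argument is a complete and correct proof by the standard route: Schur triangularisation, Newton's identities to recover the eigenvalue multiset from $\operatorname{tr}A$ and $\operatorname{tr}A^2$, the Hilbert--Schmidt identity $\operatorname{tr}TT^*=|\lambda_1|^2+|\lambda_2|^2+|c|^2$ to recover $|c|$, and conjugation by $\operatorname{diag}(1,u)$ to match phases. That is essentially the proof in the cited reference, and nothing is missing there.

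Part (2), however, contains a genuine gap which you have correctly located but not closed. After the Schur reduction, everything hinges on showing that the four mixed words $\operatorname{tr}AA^*$, $\operatorname{tr}A^2A^*$, $\operatorname{tr}A^2(A^*)^2$ and $\operatorname{tr}A^2(A^*)^2AA^*$ separate the orbits of the strictly upper-triangular triple $(\alpha,\beta,\gamma)$ under the residual diagonal torus action (and under the freedom to reorder the diagonal, which permutes and reassigns the off-diagonal entries --- a point your sketch does not mention). This separation is the entire content of the $3\times 3$ case; the preceding steps are routine. It is also not a formality: your own description of the orbit as ``the three moduli together with the phase of $\alpha\gamma\overline{\beta}$'' is only correct generically, since when one of $\alpha,\beta,\gamma$ vanishes the cyclic product carries no information and the stabiliser jumps, so the sufficiency of the listed words must be verified stratum by stratum, exactly as in \cite{djokovic2007unitarily}. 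As written, the hard step is deferred either to an unperformed computation or to the literature. Deferring to the literature is legitimate --- it is what the paper itself does --- but then part (2) of your write-up is a citation with a reduction attached, not a proof.
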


Part $(a)$ of this theorem is \cite[Thm 2.2.8]{johnson1985matrix}, and part $(b)$ can be found in \cite{djokovic2007unitarily}.

\end{subsection}
\end{section}

\begin{section}{The Elliptical Range Theorem and Generalisations}\label{S2}
\subsection{A Short Proof Via the Joukowsky Transform}\label{2.1n}

The proof of the Elliptical Range Theorem below uses the same method as that given in Section 1 of \cite{hornjohnson}, but is shortened by incorporating Joukowsky transforms.

\begin{Theorem}\label{2ndproof}
    Let $A$ be a $ 2 \times 2$ matrix. Let $$
    \theta = \frac{\pi - \operatorname{arg}(\det (A - \frac{\operatorname{tr}(A)}{2}))}{2} 
    $$
    and $A_0 = e^{i \theta}(A - \frac{\operatorname{tr}(A)}{2})$. Then $W(A)$ is an ellipse with boundary parametrised by 
$$
t \mapsto \frac{\operatorname{tr}(A)}{2} +  e^{-i\theta}\left(\sqrt{\frac{\|A_0\|_{HS}^2}{4} - \frac{\det(A_0)}{2}} \cos(t) + i \sqrt{\frac{\|A_0\|_{HS}^2}{4} + \frac{\det(A_0)}{2}} \sin(t)\right).
$$ 
\end{Theorem}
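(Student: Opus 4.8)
The plan is to reduce the claim to a direct computation of the numerical range of a trace-zero upper-triangular matrix, where the Joukowsky parametrisation \eqref{Jtransform} lets us read off the ellipse immediately. The two elementary facts I would use repeatedly are that $W(A+\beta I)=W(A)+\beta$ and $W(\alpha A)=\alpha W(A)$ for scalars $\alpha,\beta$. Writing $B=A-\frac{\operatorname{tr}(A)}{2}I$, these give $W(A)=\frac{\operatorname{tr}(A)}{2}+e^{-i\theta}W(A_0)$, so it suffices to identify $W(A_0)$, where $A_0=e^{i\theta}B$ has trace zero. First I would check that the stated $\theta$ is engineered precisely so that $\det(A_0)=e^{2i\theta}\det(B)=-|\det(B)|$ is real and non-positive; this is a one-line computation from $2\theta=\pi-\arg\det(B)$.

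Next I would put $A_0$ in Schur form. Since unitary equivalence preserves $W$, $\|\cdot\|_{HS}$ and $\det$, I may assume $A_0=\begin{pmatrix}\lambda & c\\0 & -\lambda\end{pmatrix}$ (trace zero forces the diagonal entries to be $\pm\lambda$). Because $\det(A_0)=-\lambda^2$ is real and $\le 0$ by the previous step, $\lambda$ must be real; conjugating by a diagonal unitary I may further take $\lambda\ge 0$ and $c\ge 0$ real without changing $W$, $\|\cdot\|_{HS}$ or $\det$. Parametrising unit vectors as $x=(\cos\phi, e^{i\psi}\sin\phi)$ and expanding $\langle A_0x,x\rangle$ then yields, after routine simplification, the expression $\lambda\cos(2\phi)+\tfrac{c}{2}e^{i\psi}\sin(2\phi)$.

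The heart of the argument is to extract the boundary ellipse from this two-parameter family. I would fix the direction and optimise over the phase $\psi$ and the angle $\phi$ (equivalently, compute the envelope of the circles traced as $\psi$ varies for each fixed $\phi$), which collapses the family to the one-parameter curve $t\mapsto \sqrt{\lambda^2+\tfrac{c^2}{4}}\,\cos t + i\,\tfrac{c}{2}\sin t$. By \eqref{Jtransform} this is exactly the boundary of the ellipse with semi-axes $\sqrt{\lambda^2+\tfrac{c^2}{4}}$ and $\tfrac{c}{2}$, and Toeplitz--Hausdorff convexity fills in the interior. I expect this reduction from two parameters to the Joukowsky one-parameter boundary---and verifying that nothing is lost when filling the region---to be the main technical obstacle.

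Finally I would match constants. Computing $\|A_0\|_{HS}^2=2\lambda^2+c^2$ and $\det(A_0)=-\lambda^2$ gives $\frac{\|A_0\|_{HS}^2}{4}-\frac{\det(A_0)}{2}=\lambda^2+\frac{c^2}{4}$ and $\frac{\|A_0\|_{HS}^2}{4}+\frac{\det(A_0)}{2}=\frac{c^2}{4}$, so the two semi-axes are precisely the stated square roots; since $\|\cdot\|_{HS}$ and $\det$ are unitary invariants these values agree with those of the original $A_0$. Substituting the boundary parametrisation of $W(A_0)$ into $W(A)=\frac{\operatorname{tr}(A)}{2}+e^{-i\theta}W(A_0)$ produces exactly the claimed parametrisation, completing the proof. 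A small point worth flagging is positivity of the radicands: the inequality $\|M\|_{HS}^2\ge 2|\det M|$ (a one-line AM--GM estimate) guarantees both square roots are real.
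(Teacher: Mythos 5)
Your proposal is correct, and the constants all check out ($\|A_0\|_{HS}^2 = 2\lambda^2 + c^2$, $\det A_0 = -\lambda^2$, giving semi-axes $\sqrt{\lambda^2 + c^2/4}$ and $c/2$), but it takes a genuinely different route at the key step. The paper also reduces to a trace-zero matrix via the same affine relations and the same choice of $\theta$ making $\det A_0 \le 0$, but it then uses the \emph{anti-diagonal} canonical form $s\bigl(\begin{smallmatrix}0 & b\\ c & 0\end{smallmatrix}\bigr)$ with $b \ge c \ge 0$ rather than the Schur triangular form. With that normal form the computation $\langle A_0 x, x\rangle = \tfrac{\sin 2\theta}{2}\,J_{b,c}(e^{i\psi})$ exhibits $W(A_0)$ as a union of \emph{concentric scaled copies of a single ellipse boundary} (the range of one Joukowsky transform), so the solid ellipse is obtained immediately by letting the scalar $\tfrac{\sin 2\theta}{2}$ sweep $[-\tfrac12,\tfrac12]$ --- no envelope computation and no appeal to Toeplitz--Hausdorff. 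Your triangular form instead produces the family of circles of centre $\lambda\cos 2\phi$ and radius $\tfrac{c}{2}|\sin 2\phi|$, whose envelope must then be computed; this is exactly the classical Horn--Johnson argument that the paper states it is shortening. Your envelope step does work (the quadratic $(x-\lambda u)^2 + y^2 = \tfrac{c^2}{4}(1-u^2)$ in $u$ has a root in $[-1,1]$ precisely for $(x,y)$ in the closed ellipse, so the union of circles already fills the interior and Toeplitz--Hausdorff is not even needed), but it is the more laborious path; what the paper's anti-diagonal form buys is that the two-parameter family collapses to a one-parameter scaling of a fixed Joukowsky ellipse for free. One cosmetic remark: a diagonal unitary conjugation fixes the diagonal entries, so it normalises $c \ge 0$ but not the sign of $\lambda$; you should instead note that the Schur form allows either ordering of the eigenvalues $\pm\lambda$ on the diagonal, which is how one arranges $\lambda \ge 0$.
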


\begin{proof}
    Since $\operatorname{tr}(A_0) = 0$ it is known (see for example Page 19 in \cite{hornjohnson}) that $A_0$ is unitarily equivalent to $\tilde{A_0} = s \begin{pmatrix}
        0 & b \\
        c & 0 
    \end{pmatrix}$ where $|s| = 1$ and $b \geq c \geq 0$, and since $\det A_0 \leq 0$, we have $s = \pm 1$. We initially assume $s=1$. Any unit vector in $\mathbb{C}^2$ is a unimodular constant multiple of $\begin{pmatrix}
        \cos (\alpha) \\
        e^{i \psi} \sin (\alpha)
    \end{pmatrix}$ for $\psi \in [0, 2 \pi]$, and so
\begin{align}
&W(\tilde{A_0})\\
&= \left\{ \left\langle \begin{pmatrix}
        0 & b \\
        c & 0 
    \end{pmatrix}\begin{pmatrix}
        \cos (\alpha) \\
        e^{i \psi} \sin (\alpha)
    \end{pmatrix}, \begin{pmatrix}
        \cos (\alpha) \\
        e^{i \psi} \sin (\alpha)
    \end{pmatrix} \right\rangle  :  \psi \in [0, 2 \pi]   , \alpha \in [0, 2 \pi ]\right\}\\
    &= \{\sin (\alpha) \cos (\alpha) (be^{i \psi} +  c e^{-i \psi}) : \psi \in [0, 2 \pi] , \,   \alpha \in [0, 2 \pi ]  \} \\
    &= \bigcup_{\alpha \in [0, 2 \pi]} \left\{ \frac{\sin (2 \alpha)}{2} \ran J_{b,c} \right\} \label{Qanswer}.
\end{align}
Observe that since $\ran J_{b,c} = \ran J_{-b,-c} $, in the end $W(\tilde{A_0})$  is the same regardless of whether $s = 1$ or $s=-1$. Observe that \eqref{Qanswer} is a union of concentric boundaries of ellipses, thus is given by  $\operatorname{conv}(\frac{1}{2} \ran J_{b,c})$, which is the ellipse with boundary parametrised by $t \mapsto \frac{b+c}{2}  \cos (t) + i \frac{b-c}{2}  \sin (t)$. Noting that \begin{equation}\label{useforC}
    \frac{b+c}{2} = \sqrt{\frac{\|\tilde{A_0}\|_{HS}^2}{4} - \frac{\det(\tilde{A_0})}{2}}, \, \, \, \frac{b-c}{2} =  \sqrt{\frac{\|\tilde{A_0}\|_{HS}^2}{4} + \frac{\det(\tilde{A_0})}{2}}
\end{equation} and the quantities $\| \, \|_{HS}$ and $\det( \, )$ are invariant under unitary equivalence means $W(A_0)$ is the ellipse with boundary parametrised by
$$
t \mapsto \sqrt{\frac{\|A_0\|_{HS}^2}{4} - \frac{\det(A_0)}{2}} \cos(t) + i \sqrt{\frac{\|A_0\|_{HS}^2}{4} + \frac{\det(A_0)}{2}} \sin(t).
$$
The general case now follows via the linearity relation $W(A_0) = e^{i \theta}W(A) - e^{i \theta}\frac{\tr(A)}{2}$.
\end{proof}
Observe that from \eqref{Qanswer}, even when $b,c$ are complex 
$$
W \left( \begin{pmatrix}
    0 & b \\
     c & 0 
\end{pmatrix} \right) = \frac{\operatorname{conv}(\ran J_{b,c})}{2},
$$
thus we can conclude the following corollary to the theorem above.

\begin{Corollary}
    For $b,c \in \mathbb{C}$, we have $\ran J_{b,c}$ is the boundary of an ellipse.
\end{Corollary}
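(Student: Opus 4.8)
The plan is to reduce the statement to the elementary fact that any real-linear image of the unit circle is an ellipse. Writing $z = e^{it}$, the defining relation gives $J_{b,c}(e^{it}) = b e^{it} + c e^{-it}$; separating real and imaginary parts and collecting the coefficients of $\cos t$ and $\sin t$ expresses the point $J_{b,c}(e^{it})$, viewed in $\R^2$, as $M \binom{\cos t}{\sin t}$ for a fixed real $2 \times 2$ matrix $M$ whose entries are the real and imaginary parts of $b$ and $c$. Thus $\ran J_{b,c}$ is precisely the image $M(\T)$ of the unit circle under a single linear map.

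First I would write out $M$ explicitly and record the one-line computation $\det M = |b|^2 - |c|^2$. When $|b| \neq |c|$ the map $M$ is invertible, and an invertible linear image of $\T$ is the boundary of a genuine (non-degenerate) ellipse: this is the principal-axis theorem, or equivalently follows from the singular value decomposition $M = U \Sigma V^*$, under which $\T$ is carried to $\T$ by the orthogonal factor $V^*$, stretched to an axis-aligned ellipse boundary by $\Sigma$, and rotated by $U$. Since $M$ is then a homeomorphism of $\R^2$, it sends the simple closed curve $\T$ bounding the convex disc $\D$ to a simple closed curve bounding the convex set $M(\D)$, so $M(\T)$ is exactly the boundary of that ellipse, traversed once.

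Alternatively, to stay within the framework already established, I would invoke the identity $W\bigl(\begin{smallmatrix} 0 & b \\ c & 0 \end{smallmatrix}\bigr) = \tfrac12 \operatorname{conv}(\ran J_{b,c})$ displayed before the corollary, together with the Elliptical Range Theorem, to conclude that $\operatorname{conv}(\ran J_{b,c})$ is a filled ellipse $E$. Because $\ran J_{b,c}$ is compact (a continuous image of $\T$), every extreme point of the convex hull of a compact set belongs to that set, and every boundary point of an ellipse is extreme, one gets $\partial E \subseteq \ran J_{b,c}$; the linear-map description then supplies the reverse inclusion, since $M$ cannot push points of $\T$ into the interior of $E$.

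The main obstacle is the gap between a set and its convex hull: the Elliptical Range Theorem, via the above identity, only tells us directly that $\operatorname{conv}(\ran J_{b,c})$ is an ellipse, whereas the corollary asserts that the range itself is the boundary curve. Closing this gap is exactly what the linear-map representation (or the extreme-point argument) achieves. The remaining care concerns the degenerate case $|b| = |c|$, where $M$ is singular and $\ran J_{b,c}$ collapses to a line segment (or a single point when $b = c = 0$); this should be handled by reading \emph{ellipse} so as to include such degeneracies, consistently with the real case $J_{a,b}(e^{it}) = (a+b)\cos t + i|a-b|\sin t$ at $a = b$, or else flagged explicitly.
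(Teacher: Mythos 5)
Your proof is correct, and your primary route is genuinely different from the paper's. The paper derives the corollary in one line from the identity $W\bigl(\begin{smallmatrix} 0 & b \\ c & 0\end{smallmatrix}\bigr) = \tfrac12\operatorname{conv}(\ran J_{b,c})$ (read off from \eqref{Qanswer}) together with Theorem \ref{2ndproof} --- essentially your second, ``stay within the framework'' route. Your first route instead writes $J_{b,c}(e^{it}) = M\binom{\cos t}{\sin t}$ for an explicit real $2\times 2$ matrix $M$ and invokes the SVD to see that a linear image of $\T$ is the boundary of a (possibly degenerate) ellipse; the computation $\det M = |b|^2 - |c|^2$ checks out. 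This buys you three things the paper's one-liner does not: it is self-contained (no appeal to the Elliptical Range Theorem, so the corollary could even be proved first and fed into Theorem \ref{2ndproof} rather than the other way around); it explicitly closes the hull-versus-boundary gap you correctly identify --- the paper's cited identity only yields that $\operatorname{conv}(\ran J_{b,c})$ is a filled ellipse, and passing from that to ``$\ran J_{b,c}$ \emph{is} the boundary'' genuinely requires either your extreme-point argument or the linear-map description; and it isolates the degenerate case $|b|=|c|$, where the range collapses to a segment, which the paper leaves implicit in its convention that ``ellipse'' admits degeneracies. The only caution is that your extreme-point patch for the second route should note that $\ran J_{b,c}$ is \emph{closed} (compactness of $\T$ plus continuity), since extreme points of $\operatorname{conv}(K)$ lie in $K$ only for compact $K$; you do say this, so the argument is complete.
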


A natural question arising in numerical range theory is the following, which is often referred to as the inverse numerical range problem. 
\begin{Question}
    Given a point $p \in W(A)$, can one find a unit norm $h$ such that $\langle Ah,h \rangle  = p$?
\end{Question}

Previous results on the inverse numerical range problem appear in \cite{inverseproblem1, inverseproblem2, inverseproblem3}. Inspecting the proof above shows that we can  answer this question in the positive in the case of $2 \times 2$ matrices. Indeed, the $\theta$ parameter corresponds to the boundary of a concentric ellipse $p$ lies on, and the $\psi$ parameter is the value such that $J_{b,c}(e^{i \psi}), p $ and the origin are co-linear. We illustrate this with an example.

Given an arbitrary $2 \times 2 $ matrix $A$ after rotating and translating, we may assume $W(A)$ is centred at the origin and has axis aligned with the real and imaginary axis.

\begin{Example}
Consider $A = \begin{pmatrix}
    0 & 8 \\
    4 & 0
\end{pmatrix}$ and the point $p = 1 + i$. Then by the theorem above $W(A)$ has boundary parametrised by $t \mapsto 6 \cos (t) + 2i \sin (t)$. Since the equation of the boundary of $W(A)$ in Cartesian coordinates is $\frac{x^2}{36} + \frac{y^2}{4} = 1$, to find which boundary of a concentric ellipse the point $1+i$ lies on we solve $\frac{1}{36r^2} + \frac{1}{4r^2} = 1$ for $r$ (this is effectively solving $p \in r \partial(W(A)) ).$ This is solved for $r = \frac{\sqrt{10}}{6}$, so from \eqref{Qanswer}, we see $\theta $ will be such that $\sin ( 2 \theta) = \frac{\sqrt{10}}{6}$. In order to find $\psi$ we solve, $\frac{\sqrt{10}}{12}J_{8,4}(e^{i \psi}) = 1+i$. This amounts to solving the quadratic $2e^{2i \psi} - \frac{3(1+i)}{\sqrt{10}}\,e^{i \psi} + 1 = 0$ in $e^{i \psi}$, which has solution given by $ e^{i \psi} =\frac{1 + 3i}{\sqrt{10}}$. To conclude, we have shown that if 
$$
h = \begin{pmatrix}
        \cos (\theta) \\
        e^{i \psi} \sin (\theta)
    \end{pmatrix},
$$
where $\sin ( 2 \theta) = \frac{\sqrt{10}}{6}$ and $e^{i \psi} =\frac{1 + 3i}{\sqrt{10}}$, then $\langle Ah,h \rangle  = 1 + i$.
\end{Example}

\begin{subsection}{Generalising to $3 \times 3$ matrices}\label{2.2}

It is know that every $3 \times 3$ matrix $A$ is unitarily equivalent to a matrix with $\frac{\operatorname{tr}(A)}{3}$ on the diagonal (see for example Theorem 1.3.4 in \cite{hornjohnson}). Furthermore, we trivially have $W(A  - \frac{\operatorname{tr}(A)}{3}I_d) = W(A) - \frac{\operatorname{tr}(A)}{3}$. Thus, to fully characterize the numerical range of $3 \times 3$ matrices, one may assume the principal diagonal entries are $0$, i.e., $A$ is of the form
\begin{equation}\label{embed}
A =  \begin{pmatrix}
        0 & a_{12} & a_{13} \\
        a_{21} & 0 & a_{23} \\
        a_{31} & a_{32} & 0
    \end{pmatrix}.
\end{equation}
If $a_{13} = a_{31} = a_{23} = a_{32} = 0$, then $A$ is the direct sum of a $2 \times 2$ matrix and zero, so in this case the numerical range is determined by the Elliptical Range Theorem \ref{2ndproof}. Consider instead the family of matrices of the form
\[
A = \begin{pmatrix}
        0 & a_{12} & a_{13} \\
        a_{21} & 0 & 0 \\
        a_{31} & 0 & 0
    \end{pmatrix}.
\]
Within this family, when $a_{13} = a_{31} = 0$ one obtains an isomorphic copy of the space of \(2 \times 2\) matrices of the form  
\[
\begin{pmatrix}
0 & a_{12} \\
a_{21} & 0
\end{pmatrix},
\]
and, in view of Theorem~\ref{2ndproof}, it suffices to restrict to matrices of this type when characterising the numerical range in the \(2 \times 2\) case. At the same time, this family embeds naturally as a subspace of matrices of the form \eqref{embed}. We generalise Theorem \ref{2ndproof} to characterise the numerical range of matrices in this intermediate setting.

In the proof of the next theorem we make use of support functions. For any compact set $K$, the support function in direction $e^{i \phi}$ is defined as
\[
h_K(\phi) = \max_{z \in K} \Re\big( z e^{-i\phi} \big), \, \, \phi \in [0, 2 \pi].
\]
It is known (see for example Appendix 2 in \cite{gau2021numerical}) that if $W(A)$ has no line segment on its boundary then the support function $h_{W(A)}$ is differentiable and the boundary of $W(A)$ is parametrised by
\begin{equation}\label{naturalpara}
    x(\phi) = h_{W(A)}(\phi) \cos\phi - h_{W(A)}'(\phi) \sin\phi, \, \, 
y(\phi) = h_{W(A)}(\phi) \sin\phi + h_{W(A)}'(\phi) \cos\phi,
\end{equation}
where $\phi \in [0, 2 \pi].$ So the support function $h_{W(A)}$ uniquely determines the numerical range. The following lemma is well known, but for completeness we include a proof.

\begin{Lemma}\label{ellipsesupport}
    Let the ellipse $E$ have boundary parametrised by 
    $$
    t \mapsto a \cos (t) + i b \sin (t) , \, \, a,b >0, \, \, t \in [0,2 \pi].
    $$ Then $ h_E(\phi) = \sqrt{a^2\cos^2(\phi) + b^2\sin^2(\phi)}$.
\end{Lemma}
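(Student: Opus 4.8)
The plan is to argue directly from the definition of the support function. Since $E$ is a compact convex set (recall that ``ellipse'' here means the filled ellipse), the maximum defining $h_E(\phi) = \max_{z \in E}\Re\big( z e^{-i\phi}\big)$ is attained at an extreme point of $E$, and every extreme point of the filled ellipse lies on the bounding curve $t \mapsto a\cos(t) + ib\sin(t)$. Hence it suffices to maximise $\Re\big( z e^{-i\phi}\big)$ as $z$ ranges over this parametrised boundary, i.e.\ over $t \in [0,2\pi]$.

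First I would substitute $z = a\cos(t) + ib\sin(t)$ together with $e^{-i\phi} = \cos(\phi) - i\sin(\phi)$ and take real parts. Expanding the product $(a\cos(t) + ib\sin(t))(\cos(\phi) - i\sin(\phi))$ and discarding the imaginary part collapses everything to $a\cos(\phi)\cos(t) + b\sin(\phi)\sin(t)$. Thus the problem reduces to computing $h_E(\phi) = \max_{t \in [0,2\pi]}\big( a\cos(\phi)\cos(t) + b\sin(\phi)\sin(t)\big)$.

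The remaining step is the elementary maximisation of $t \mapsto A\cos(t) + B\sin(t)$ with $A = a\cos(\phi)$ and $B = b\sin(\phi)$. Writing this in amplitude--phase form $\sqrt{A^2+B^2}\,\cos(t - \delta)$ for a suitable phase $\delta$ (equivalently, applying the Cauchy--Schwarz inequality to the vectors $(A,B)$ and $(\cos t,\sin t)$, the latter having unit norm), its maximum over $t$ equals exactly $\sqrt{A^2+B^2} = \sqrt{a^2\cos^2(\phi) + b^2\sin^2(\phi)}$, which is the claimed formula.

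I do not expect any genuine obstacle here, as the core computation is routine trigonometry; the result is essentially the statement that the support function of an ellipse is the amplitude of a sinusoid. The only point that deserves an explicit line of justification is the reduction from the filled ellipse to its boundary parametrisation, which is valid because a real-linear functional on a compact convex set attains its maximum at an extreme point, and the extreme points of the filled ellipse are precisely the points of the bounding curve $t \mapsto a\cos(t)+ib\sin(t)$.
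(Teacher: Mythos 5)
Your proposal is correct and follows essentially the same route as the paper: reduce to maximising $a\cos(\phi)\cos(t)+b\sin(\phi)\sin(t)$ over $t$ and conclude by Cauchy--Schwarz (equivalently the amplitude--phase form). The only difference is that you spell out the reduction from the filled ellipse to its boundary, which the paper leaves implicit.
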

    
\begin{proof}
By definition of support functions $$ h_E(\phi) = \sup_{t \in [0, 2\pi]} (a\cos(t)\cos(\phi) + b\sin(t)\sin(\phi)) .$$
Viewing the above as the inner product of vectors in $\mathbb{R}^2$
$$
\left\langle \begin{pmatrix}
    a \cos(\phi) \\
    b\sin(\phi))
\end{pmatrix},  \begin{pmatrix}
     \cos(t) \\
    \sin(t))
\end{pmatrix} \right\rangle,
$$
and applying Cauchy-Schwarz we obtain the result.
\end{proof}

The main result of the following theorem is part $(b)$, but since we believe the description of the numerical range given in part $(a)$ may also be useful, we separate the theorem into two parts.

\begin{Theorem}\label{3by3J}
    Let \begin{equation}\label{Athmain}
        A =  \begin{pmatrix}
        0 & a_{12} & a_{13} \\
        a_{21} & 0 & a_{23} \\
        a_{31} & a_{32} & 0
    \end{pmatrix}.
    \end{equation}
\begin{enumerate}
    \item $W(A)$ is given by
\begin{equation}\label{finalJ3by3}
\left\{
  \begin{aligned}
  & s_1c_1 \,\Big(
c_2 \, J_{a_{12},a_{21}}(e^{i\psi_2}) + s_2 \, J_{a_{13},a_{31}}(e^{i\psi_3})
\Big) + s_1^2 \, s_2c_2 \, J_{a_{23},a_{32}}(e^{i(\psi_3-\psi_2)}) ,\\
  &  \text{such that }\,  \theta_1, \theta_2 \in \left[0, \frac{\pi}{2}\right], \, \psi_2, \psi_3 \in \left[0, 2 \pi\right]
  \end{aligned}
\right\},
\end{equation}
where $s_1, s_2, c_1, c_2$ are shorthands for $\sin ( \theta_1), \sin ( \theta_2), \cos( \theta_1), \cos( \theta_2)$ respectively.

\item Let $\begin{pmatrix}
    0 & a_{12} \\
    a_{21} & 0 
\end{pmatrix}$ and $\begin{pmatrix}
    0 & a_{13} \\
    a_{31} & 0 
\end{pmatrix}$ have numerical range boundaries parametrised by 
$$
\begin{aligned}
    &e^{i \alpha}(X_1\cos(t) + iY_1 \sin(t)), \,  \text{where}  \, X_1,Y_1 \geq 0 \, \text{and} \, \alpha \in [0, 2 \pi] \\
    &e^{i \beta}(X_2 \cos(s) + i Y_2 \sin(s)), \,  \text{where}  \, X_2,Y_2 \geq 0 \, \text{and} \, \beta \in [0, 2 \pi]
\end{aligned}
$$
respectively. If $a_{32} = a_{23}=0$, then $W(A)$ is an ellipse with boundary parametrised by

$$
t \mapsto \frac{e^{i \gamma}}{2}\left(\sqrt{\lambda_1} \cos (t) + i \sqrt{\lambda_2}\sin (t)\right)
$$
where
\[
\begin{aligned}
&\gamma = \frac12 \arg\Big( (X_1^2-Y_1^2)e^{i2\alpha} + (X_2^2-Y_2^2)e^{i2\beta} \Big),\\
&\lambda_{1,2} = \frac{X_1^2 + Y_1^2 + X_2^2 + Y_2^2}{2} \; \\
&\pm \frac{1}{2}\sqrt{(X_1^2-Y_1^2)^2 + (X_2^2-Y_2^2)^2 + 2(X_1^2-Y_1^2)(X_2^2-Y_2^2)\cos\big(2(\alpha-\beta)\big)}.
\end{aligned}
\]
(Here $\lambda_1$ is chosen with the $+$ sign and $\lambda_2$ with the $-$ sign.)
\end{enumerate} 

\end{Theorem}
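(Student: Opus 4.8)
The plan is to compute the support function $h_{W(A)}$ directly from the description in part~(a) and to recognise it as the support function of the ellipse in the statement. Since $W(A)$ is compact and convex (by the Toeplitz--Hausdorff theorem), it is determined by its support function, so such an identification completes the proof and, through \eqref{naturalpara}, yields the stated boundary parametrisation.

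First I would record the two Joukowsky ranges. By the corollary to Theorem~\ref{2ndproof} together with the identity $W\begin{pmatrix}0&b\\c&0\end{pmatrix}=\tfrac12\operatorname{conv}(\ran J_{b,c})$, the hypothesis that $\begin{pmatrix}0&a_{12}\\a_{21}&0\end{pmatrix}$ has numerical range boundary $e^{i\alpha}(A\cos t+iB\sin t)$ says exactly that $\ran J_{a_{12},a_{21}}$ is the boundary of the ellipse $\mathcal E_1$ of semi-axes $2A,2B$ rotated by $\alpha$; likewise $\ran J_{a_{13},a_{31}}$ is the boundary of the ellipse $\mathcal E_2$ of semi-axes $2C,2D$ rotated by $\beta$. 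Putting $a_{23}=a_{32}=0$ annihilates the final Joukowsky term in \eqref{finalJ3by3}, so part~(a) exhibits
\[
W(A)=\big\{\, s_1c_1\,(c_2\,u+s_2\,v):\;\theta_1,\theta_2\in[0,\tfrac\pi2],\;u\in\partial\mathcal E_1,\;v\in\partial\mathcal E_2\,\big\}.
\]

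Next I would evaluate $h_{W(A)}(\phi)=\sup_{z\in W(A)}\operatorname{Re}(z e^{-i\phi})$. The functional is $\mathbb R$-linear, the scalars $s_1c_1,c_2,s_2$ are nonnegative, and the four parameters are independent, so the supremum separates; using that the maximum of $\operatorname{Re}(\cdot\, e^{-i\phi})$ over $\partial\mathcal E_i$ is the support value $h_{\mathcal E_i}(\phi)$, this gives
\[
h_{W(A)}(\phi)=\max_{\theta_1,\theta_2} s_1c_1\big(c_2\,h_{\mathcal E_1}(\phi)+s_2\,h_{\mathcal E_2}(\phi)\big).
\]
Here $\max_{\theta_1}s_1c_1=\tfrac12$, and because $h_{\mathcal E_1},h_{\mathcal E_2}\ge0$ the inner maximum over $\theta_2\in[0,\tfrac\pi2]$ equals $\sqrt{h_{\mathcal E_1}(\phi)^2+h_{\mathcal E_2}(\phi)^2}$ by Cauchy--Schwarz, the optimal direction lying in the first quadrant so that it is attained. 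Feeding in the support functions of the two rotated ellipses from Lemma~\ref{ellipsesupport} (a rotation by $\alpha$ shifts the argument to $\phi-\alpha$ and a dilation scales linearly), the factors of $2$ cancel and
\[
h_{W(A)}(\phi)^2=\big(A^2\cos^2(\phi-\alpha)+B^2\sin^2(\phi-\alpha)\big)+\big(C^2\cos^2(\phi-\beta)+D^2\sin^2(\phi-\beta)\big).
\]

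Finally I would identify the right-hand side as the support-function-square of a single ellipse. Rewriting with $\cos^2=\tfrac12(1+\cos2(\cdot))$ and $\sin^2=\tfrac12(1-\cos2(\cdot))$ turns it into the constant $\tfrac12(A^2+B^2+C^2+D^2)$ plus $\tfrac12\operatorname{Re}(e^{2i\phi}\,\overline{W_0})$, where $W_0=(A^2-B^2)e^{2i\alpha}+(C^2-D^2)e^{2i\beta}$. Writing $W_0=|W_0|e^{2i\gamma}$ produces exactly the stated $\gamma=\tfrac12\arg W_0$, while $|W_0|=\lambda_1-\lambda_2$ and $\tfrac12(A^2+B^2+C^2+D^2)=\tfrac12(\lambda_1+\lambda_2)$ reproduce the stated $\lambda_{1,2}$; collecting terms gives $h_{W(A)}(\phi)^2=\lambda_1\cos^2(\phi-\gamma)+\lambda_2\sin^2(\phi-\gamma)$. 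By Lemma~\ref{ellipsesupport} this is the support function of the ellipse oriented along $e^{i\gamma}$ with squared semi-axes $\lambda_1,\lambda_2$, i.e. the ellipse in the statement, and equality of support functions of compact convex sets forces $W(A)$ to be that ellipse.

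The step I expect to be the crux is the collapse of the two-parameter maximisation. One must check that the supremum may legitimately be pushed through the scaling-and-Minkowski-sum structure coming from part~(a) --- this uses the nonnegativity of $s_1c_1,c_2,s_2$ and the independence of $\theta_1,\theta_2,\psi_2,\psi_3$ --- and that the Cauchy--Schwarz optimum in $\theta_2$ is genuinely attained in $[0,\tfrac\pi2]$, which relies on $h_{\mathcal E_1},h_{\mathcal E_2}\ge0$. Everything after that is a routine harmonic-addition (double-angle) calculation, the only remaining conceptual ingredient being the standard fact that a compact convex set --- convex here by Toeplitz--Hausdorff --- is recovered from its support function.
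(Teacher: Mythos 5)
Your route through part (b) is essentially the paper's own: the paper also reads off from \eqref{finalJ3by3} a scaled Minkowski-sum description of $W(A)$, maximises $\sin(2\theta_1)$, collapses the $\theta_2$-maximum by Cauchy--Schwarz to get $h_{W(A)}(\phi)$ as a multiple of $\sqrt{h_1(\phi)^2+h_2(\phi)^2}$, and then identifies the result as the support function of a single rotated ellipse (the paper diagonalises a $2\times 2$ symmetric matrix $M$ where you use the double-angle/complex-exponential identity; these are the same computation). Two issues remain, though. First, you give no proof of part (a): the theorem as stated includes it, and it is the sole input to your argument. The paper proves it by writing an arbitrary unit vector as $(\cos\theta_1,\ e^{i\phi_2}\sin\theta_1\cos\theta_2,\ e^{i\phi_3}\sin\theta_1\sin\theta_2)^T$ up to a global phase and expanding $\langle Ah,h\rangle$; you need to supply this (routine) step for the proposal to be a proof of the theorem.

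Second, your bookkeeping of the factor of $2$ does not lead where you say it does. You correctly note that $\ran J_{a_{12},a_{21}}$ is the boundary of the ellipse with semi-axes $2A,2B$ (since $W$ of the $2\times 2$ block is $\tfrac12\operatorname{conv}(\ran J)$), and your computation then yields $h_{W(A)}(\phi)^2=\lambda_1\cos^2(\phi-\gamma)+\lambda_2\sin^2(\phi-\gamma)$, i.e.\ the ellipse with semi-axes $\sqrt{\lambda_1},\sqrt{\lambda_2}$. That is \emph{twice} the ellipse in the statement, whose semi-axes are $\sqrt{\lambda_1}/2,\sqrt{\lambda_2}/2$, so your closing claim that this ``is the ellipse in the statement'' is false as written; you cannot assert the match without resolving the discrepancy. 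For what it is worth, a direct check suggests your value is the correct one: for $a_{12}=a_{31}=2$ and all other entries zero (the paper's first example, where $\lambda_1=\lambda_2=2$), the unit vector $h=(1/\sqrt2,\,1/2,\,1/2)^T$ gives $\langle Ah,h\rangle=\sqrt2=\sqrt{\lambda_1}$, not $\sqrt{\lambda_1}/2$. The paper's proof introduces the extra $\tfrac12$ at the step \eqref{supportnumrange}, where $\ran J_{a_{12},a_{21}}$ is silently identified with $\partial W$ of the corresponding $2\times2$ block rather than with twice it. So the substance of your calculation is sound (and arguably more careful than the paper's at exactly this point), but as a proof of the \emph{stated} theorem it fails, and the mismatch must be flagged and resolved rather than waved through with ``the factors of $2$ cancel.''
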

\begin{proof}

\begin{enumerate}
    \item

For any $h = \begin{pmatrix}
    z_1 \\
    z_2 \\
    z_3
\end{pmatrix}$ of unit norm set $|z_1|=\cos (\theta_1)$. Then $|z_2|^2 + |z_3|^2 = \sin^2(\theta_1)$, so $|z_2|=\sin (\theta_1)\cos (\theta_2)$, $|z_3|=\sin (\theta_1)\sin (\theta_2)$ 
for some $\theta_2 \in[0,\tfrac{\pi}{2}]$. Write $z_j = e^{i\phi_j}|z_j|$ for phases $\phi_j\in[0,2\pi)$, 
so
\[
h=
\begin{pmatrix}
e^{i\phi_1}\cos (\theta_1) \\
e^{i\phi_2}\sin (\theta_1)\cos (\theta_2) \\
e^{i\phi_3}\sin (\theta_1)\sin (\theta_2)
\end{pmatrix}.
\]
Thus every $h$ of unit norm takes this form for some $\phi_1, \phi_2, \phi_3, \in [0, 2 \pi], \ \theta_1, \theta_2 \in [0, \frac{\pi}{2} ]$.

Denoting $$ \tilde{h} = e^{-i\phi_1}
h =  \begin{pmatrix}
\cos (\theta_1) \\
e^{i(\phi_2 - \phi_1)}\sin (\theta_1)\cos (\theta_2) \\
e^{i(\phi_3-\phi_1)}\sin (\theta_1)\sin (\theta_2)
\end{pmatrix},
$$
and observing that $\langle Ah,h \rangle  = \langle A\tilde{h},\tilde{h} \rangle$ we see
\begin{align}
&\left\{ \langle Ah,h\rangle :  \phi_1, \phi_2, \phi_3, \in [0, 2 \pi], \ \theta_1, \theta_2 \in [0, \frac{\pi}{2} ]  \right\} \\
&= \left\{ \langle Ah,h\rangle :  \phi_1=0, \phi_2, \phi_3, \in [0, 2 \pi], \ \theta_1, \theta_2 \in [0, \frac{\pi}{2} ]  \right\}
\end{align}

Furthermore a direct computation shows $\langle A\tilde{h},\tilde{h}\rangle$ is given by
\[
\begin{aligned}
& \cos (\theta_1) \,\sin (\theta_1) \Big(
\cos (\theta_2) \,(a_{12} e^{i\phi_2} + a_{21} e^{-i\phi_2}) 
+ \sin (\theta_2) \,(a_{13} e^{i\phi_3} + a_{31} e^{-i\phi_3}) 
\Big)\\
&+ \sin^2\theta_1 \,\cos (\theta_2) \,\sin (\theta_2) \,(a_{23} e^{i(\phi_3-\phi_2)} + a_{32} e^{i(\phi_2-\phi_3)})\\
&= \cos (\theta_1) \,\sin (\theta_1) \,\Big(
\cos (\theta_2) \, J_{{a_{12}},a_{21}}(e^{i\phi_2}) + \sin (\theta_2) \, J_{a_{13},a_{31}}(e^{i\phi_3})
\Big)
\\
&+ \sin^2\theta_1 \, \cos (\theta_2) \, \sin (\theta_2) \, J_{a_{23},a_{32}}(e^{i(\phi_3-\phi_2)}),
\end{aligned}
\]
from which \eqref{finalJ3by3} follows.

\item Denote
\[
\begin{aligned}
    &E_1^{\text{rot}} = \{ e^{i\alpha} (X_1  \cos (t) + i Y_1  \sin (t)) : t \in [0,2\pi] \}, \\
    &E_2^{\text{rot}} = \{ e^{i\beta} (X_2 \cos (s) + i Y_2 \sin (s)) : s \in [0,2\pi] \},
\end{aligned}
\]
and observe from part $(a)$,
\begin{equation}\label{supportnumrange}
    W(A) = \left\{ \frac{\sin( 2 \theta_1)}{2} \,\Big(
\cos(\theta_2) \, E_{1}^{rot} + \sin(\theta_2) \, E_{2}^{rot})
\Big) :  \ \theta_1, \theta_2 \in \left[0, \frac{\pi}{2}\right] \right\}
\end{equation}
where the summation above is interpreted as a Minkowski summation.
From \eqref{supportnumrange}, we see $h_{W(A)} $ is attained when $\sin (2 \theta_1) = 1$ and furthermore since the support function of the Minkowski sum of two sets is the sum of the support function on the individual sets we have
\[
h_{W(A)}(\phi) = \frac{1}{2} \left(\max_{\theta_2 \in [0, 2 \pi]} \big( \cos(\theta_2) \, h_{E_1^{\text{rot}}}(\phi) + \sin( \theta_2)\, h_{E_2^{\text{rot}}}(\phi) \big) \right).
\]
Viewing the above as the (real) inner product  $$\left\langle \begin{pmatrix}
    h_{E_1^{\text{rot}}}(\phi) \\
    h_{E_2^{\text{rot}}}(\phi)
\end{pmatrix},  \begin{pmatrix}
    \cos( \theta_2) \\
    \sin( \theta_2)
\end{pmatrix} \right\rangle
$$
and applying Cauchy-Schwarz, we obtain
\begin{equation}\label{support2}
h_{W(A)}(\phi)=\frac{1}{2} \left( \sqrt{ h_{E_1^{\text{rot}}}(\phi)^2 + h_{E_2^{\text{rot}}}(\phi)^2 } \right).
\end{equation}

By Lemma \ref{ellipsesupport} for $E_1 = e^{-i \alpha}E_1^{\text{rot}}, \, E_2 = e^{-i \beta}E_2^{\text{rot}}$
\[
h_{E_1}(\phi) = \sqrt{X_1^2 \cos^2 (\phi) + Y_1^2 \sin^2 (\phi)},\quad
h_{E_2}(\phi) = \sqrt{X_2^2 \cos^2 (\phi) + Y_2^2 \sin^2 (\phi)},
\]
so 
\[
\begin{aligned}
&h_{E_1^{\text{rot}}}(\phi) = \sqrt{X_1^2 \cos^2(\phi-\alpha) + Y_1^2 \sin^2(\phi-\alpha)}\\
&h_{E_2^{\text{rot}}}(\phi) = \sqrt{X_2^2 \cos^2(\phi-\beta) + Y_2^2 \sin^2(\phi-\beta)}.
\end{aligned}
\]

For $E_1^{\mathrm{rot}}$ and $E_2^{\mathrm{rot}}$ we write the squared support function in quadratic form.  
Let 
\[
u = \begin{pmatrix} \cos\phi \\  \sin\phi \end{pmatrix}.
\]

Then
\[
h_{E_1^{\mathrm{rot}}}(\phi)^2 
= u^T \, R(\alpha)
\begin{pmatrix} X_1^2 & 0 \\  0 & Y_1^2 \end{pmatrix}
R(\alpha)^T \, u,
\]
\[
h_{E_2^{\mathrm{rot}}}(\phi)^2 
= u^T \, R(\beta)
\begin{pmatrix} X_2^2 & 0 \\  0 & Y_2^2 \end{pmatrix}
R(\beta)^T \, u,
\]
where \[
R(\alpha) =
\begin{pmatrix}
\cos\alpha & -\sin\alpha \\
\sin\alpha & \cos\alpha
\end{pmatrix}
\]
is the unitary rotation matrix.

Hence from \eqref{support2}, $h_{W(A)}(\phi)^2 = \frac{1}{4}u^T M u,$ where
\begin{equation}\label{M}
    M = R(\alpha)\begin{pmatrix}X_1^2&0\\0&Y_1^2\end{pmatrix}R(\alpha)^T
\;+\;R(\beta)\begin{pmatrix}X_2^2&0\\0&Y_2^2\end{pmatrix}R(\beta)^T.
\end{equation}

As $M$ is the sum of two real symmetric positive definite matrices, $M$ is also real symmetric and positive definite, so it can be unitarily diagonalised as
\[
M = R(\gamma)
\begin{pmatrix}\lambda_1 & 0 \\  0 & \lambda_2\end{pmatrix}
R(\gamma)^T, \qquad \lambda_1,\lambda_2 \geq 0.
\]

Therefore
\[
h_{W(A)}(\phi) =\frac{1}{2} \sqrt{\lambda_1 \cos^2(\phi-\gamma) + \lambda_2 \sin^2(\phi-\gamma)}.
\]
This is exactly the support function of an ellipse centered at the origin, rotated by angle $\gamma$, with semi-axes $\frac{\sqrt{\lambda_1}}{2}, \frac{\sqrt{\lambda_2}}{2}$.

We now compute the values $\gamma, \lambda_1, \lambda_2$. Denote
\[
M = \begin{pmatrix} m_{11} & m_{12} \\[2mm] m_{12} & m_{22} \end{pmatrix},
\]
then directly computing from \eqref{M} gives
\[
\begin{aligned}
m_{11} &= X_1^2 \cos^2\alpha + Y_1^2 \sin^2\alpha + X_2^2 \cos^2\beta + Y_2^2 \sin^2\beta,\\[1mm]
m_{22} &= X_1^2 \sin^2\alpha + Y_1^2 \cos^2\alpha + X_2^2 \sin^2\beta + Y_2^2 \cos^2\beta,\\[1mm]
m_{12} &= (X_1^2 - Y_1^2)\sin\alpha \cos\alpha + (X_2^2 - Y_2^2)\sin\beta \cos\beta.
\end{aligned}
\]
So
\[
\begin{aligned}
m_{11}-m_{22} &= (X_1^2 - Y_1^2)\cos(2\alpha) + (X_2^2 - Y_2^2)\cos(2\beta),\\[1mm]
2 m_{12} &= (X_1^2 - Y_1^2)\sin(2\alpha) + (X_2^2 - Y_2^2)\sin(2\beta),\\[1mm]
m_{11}+m_{22} &= X_1^2 + Y_1^2 + X_2^2 + Y_2^2.
\end{aligned}
\]

As $M$ is symmetric, the eigenvalues \(\lambda_1, \lambda_2\) of \(M\) are

\begin{equation}\label{lambda12}
    \lambda_{1,2} = \frac{m_{11}+m_{22}}{2} \;\pm\; \frac{1}{2}\sqrt{(m_{11}-m_{22})^2 + 4 m_{12}^2}.
\end{equation}

Substituting the $m_{ij}$ values into the above gives
\[
\begin{aligned}
&\lambda_{1,2} = \frac{X_1^2 + Y_1^2 + X_2^2 + Y_2^2}{2} \; \\
&\pm \frac{1}{2}\sqrt{(X_1^2-Y_1^2)^2 + (X_2^2-Y_2^2)^2 + 2(X_1^2-Y_1^2)(X_2^2-Y_2^2)\cos\big(2(\alpha-\beta)\big)}.
\end{aligned}
\]

As $M$ is symmetric, for $
\theta = \tfrac{1}{2}\arctan\!\left(\frac{2m_{12}}{m_{11}-m_{22}}\right)$ (this is often called the principal axis orientation)
the eigenvectors of $M$ are
\[
v_1 = \begin{pmatrix}
\cos\theta \\ 
\sin\theta
\end{pmatrix},
\qquad
v_2 = \begin{pmatrix}
-\sin\theta \\ 
\cos\theta
\end{pmatrix},
\]
with $v_1$ corresponding to the larger eigenvalue (i.e. the eigenvalue with $+$ in \eqref{lambda12}).

Thus
\[
\gamma = \frac{1}{2} \arctan \left( \frac{(X_1^2-Y_1^2)\sin(2\alpha) + (X_2^2-Y_2^2)\sin(2\beta)}{ (X_1^2-Y_1^2)\cos(2\alpha) + (X_2^2-Y_2^2)\cos(2\beta)} \right),
\]
or equivalently
\[
\gamma = \frac12 \arg\Big( (X_1^2-Y_1^2)e^{i2\alpha} + (X_2^2-Y_2^2)e^{i2\beta} \Big).
\]

\end{enumerate}
\end{proof}

For a $2 \times 2$ matrix $B$, and $b \in \mathbb{C}$, the direct sum of $B$ and $b$,
$$
\begin{pmatrix}
    B & 0 \\
    0 & b
\end{pmatrix},
$$
has numerical range given by $\operatorname{conv}(\{W(B), b \})$. Since the numerical range described in part $(b)$ of the above is an ellipse, this may lead one to question whether the class of matrices considered are all unitarily equivalent to a direct sum of a $2 \times 2$ matrix and a $1 \times 1$ scalar matrix. However, this is not the case, since if $A$ is given by \eqref{Athmain} with $a_{23} = a_{32} = 0$ and there exists a unitary matrix, $U$, such that 
$$
U^*A U = \begin{pmatrix}
    B & 0 \\
    0 & b
\end{pmatrix},
$$
then 
$$
U^*(A^*A - AA^*)U = \begin{pmatrix}
    B^*B - BB^* & 0 \\
    0 & 0
\end{pmatrix}.
$$
Thus, this would imply $\det (A^*A-AA^*)  =0 $, but clearly the determinant of
\[
A^*A - AA^* \;=\;
\begin{pmatrix}
|a_{21}|^2 + |a_{31}|^2 - (|a_{12}|^2 + |a_{13}|^2) & 0 & 0 \\ 
0 & |a_{12}|^2 - |a_{21}|^2 & \overline{a_{12}}\,a_{13} - a_{21}\,\overline{a_{31}} \\ 
0 & \overline{a_{13}}\,a_{12} - a_{31}\,\overline{a_{21}} & |a_{13}|^2 - |a_{31}|^2
\end{pmatrix}
\]
may be non-zero.

We present two examples of Theorem~\ref{3by3J}(b) to illustrate the effectiveness of this approach compared with the method based on Kippenhahn polynomials.

\begin{Example}
    Let $A= \begin{pmatrix}
        0 & 2 & 0 \\
        0 & 0 & 0\\
        2 & 0 & 0
    \end{pmatrix}$. Then, with the notation given in Theorem \ref{3by3J}, one can check from Theorem \ref{2ndproof} that $X_1=Y_1=X_2=Y_2 = 1$ and $\alpha $ and $ \beta $ can be chosen to be 0. Thus $W(A)$ has natural parametrisation given by $z(\phi) = \frac{\sqrt{2}}{2}e^{i \phi}$, and trivially $W(A)$ is a circle radius $\frac{\sqrt{2}}{2}$.
\end{Example}

\begin{Example}
    Let $A= \begin{pmatrix}
        0 & 8 & 2i \\
        4 & 0 & 0 \\
        4i & 0 & 0
    \end{pmatrix}$. Then, with the notation in Theorem \ref{3by3J}, one obtains from Theorem \ref{2ndproof} that $X_1=6, \, Y_1=2, \, X_2=3, \, Y_2=1$ and $\alpha =0 , \, \beta = \frac{\pi}{2}$. Elementary arithmetic gives $\lambda_1 = 37, \, \lambda_2 = 13, \, \gamma = 0$, thus the boundary of $W(A)$ is parametrised by
    $$
    t \mapsto \frac{1}{2}\left(\sqrt{37}\ \cos (t) + \sqrt{13}i \sin (t)\right).
    $$
\end{Example}
\end{subsection}
\end{section}

\section{Proving the Elliptical Range Theorem via Specht's Theorem}\label{2.3}
In this section we use Specht's Theorem \ref{Specht} to produce a novel proof of the Elliptical Range Theorem.







\begin{Lemma}\label{2.1}
    For a $2 \times 2$ matrix $A$, we have $p \in W(A) $ if and only if $A$ is unitarily equivalent to a matrix of the form \begin{equation}\label{Apre}
        X= \begin{pmatrix}
    p & x_{12} \\
    x_{21} & x_{22}
    \end{pmatrix},
    \end{equation}
for some $x_{12}, x_{21}, x_{22} \in \mathbb{C}$.
\end{Lemma}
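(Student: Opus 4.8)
The plan is to reduce the statement to the elementary observation that the $(1,1)$ entry of any matrix $M$ is exactly $\langle M e_1, e_1\rangle$, a point of $W(M)$, and to combine this with the invariance of the numerical range under unitary equivalence recorded in Section~\ref{1.1}.

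I would dispatch the backward implication first, as it is immediate. If $A$ is unitarily equivalent to a matrix $X$ of the form \eqref{Apre}, then $W(A) = W(X)$. Since $e_1$ is a unit vector and its associated quadratic form value $\langle X e_1, e_1\rangle$ is precisely the $(1,1)$ entry $p$ of $X$, we get $p \in W(X) = W(A)$.

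For the forward implication, suppose $p \in W(A)$, so by definition there is a unit vector $h \in \mathbb{C}^2$ with $\langle A h, h\rangle = p$. The idea is to rotate $h$ onto the first coordinate vector: extend $\{h\}$ to an orthonormal basis $\{h, g\}$ of $\mathbb{C}^2$ by Gram--Schmidt, and let $U$ be the unitary matrix whose columns are $h$ and $g$. Then the $(1,1)$ entry of $U^* A U$ is $(U^* A U)_{11} = (U e_1)^* A (U e_1) = \langle A h, h\rangle = p$, so $U^* A U$ has the required form \eqref{Apre}, and $A$ is unitarily equivalent to it.

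I do not expect a genuine obstacle here; the entire content is the dictionary between ``$(1,1)$ entry'' and ``value of the quadratic form at $e_1$,'' together with the freedom to unitarily rotate any prescribed unit vector onto $e_1$. The only point requiring care is bookkeeping: placing the witnessing vector $h$ in the \emph{first} column of $U$ so that $p$ lands in the $(1,1)$ slot, and keeping the adjoint on the side consistent with the convention $A = U^* B U$ fixed in Section~\ref{1.1}. The argument in fact works verbatim in every dimension, though only the $2\times 2$ case is needed for the intended application to the Elliptical Range Theorem.
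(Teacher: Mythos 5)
Your proof is correct and follows essentially the same route as the paper: the backward direction reads off $p$ as $\langle Xe_1,e_1\rangle$ and uses invariance of $W$ under unitary equivalence, while the forward direction extends the witnessing unit vector to an orthonormal basis and conjugates by the corresponding unitary. The paper states this more tersely (``with respect to the orthonormal basis $y, y_2$''), but the content is identical.
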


\begin{proof}
    Clearly any matrix of the form \eqref{Apre} will contain $p$ in its numerical range, and thus so will any matrix unitarily equivalent to a matrix of the form \eqref{Apre}. 

    Conversely, if $p \in W(A) $ then there exists a $y \in \mathbb{C}^2$ of unit norm such that $\langle Ay , y \rangle  = p$. In this case, with respect to the orthonormal basis $y, y_2 \in (\operatorname{span}y)^{\perp}$, $A$ is of the form \eqref{Apre}.
\end{proof}

The following theorem characterises the geometry of the numerical range of a $2 \times 2$ matrix.

\begin{Theorem}\label{main2}
    For a $2 \times 2$ matrix $$
    A = \begin{pmatrix}
        a_{11} & a_{12} \\
        a_{21} & a_{22},
    \end{pmatrix}
    $$ we have $ W(A) = \{ p \in \mathbb{C} : 2|p(\tr(A)-p) - \det(A)| \leqslant \|A\|_{HS}^2 - |p|^2 -  |\tr(A)-p|^2 \}.$
\end{Theorem}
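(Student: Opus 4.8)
The plan is to combine Lemma~\ref{2.1} with part (a) of Specht's Theorem~\ref{Specht}. By Lemma~\ref{2.1}, the condition $p \in W(A)$ is equivalent to the existence of $x_{12}, x_{21}, x_{22} \in \mathbb{C}$ such that $A$ is unitarily equivalent to
$$
X = \begin{pmatrix} p & x_{12} \\ x_{21} & x_{22} \end{pmatrix}.
$$
By Specht's Theorem, this unitary equivalence holds precisely when the three trace identities $\tr A = \tr X$, $\tr A^2 = \tr X^2$ and $\tr AA^* = \tr XX^*$ are satisfied. So the task reduces to determining for which $p$ one can choose the off-diagonal entries $x_{12}, x_{21}$ and the corner entry $x_{22}$ to meet these three constraints simultaneously.

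First I would unwind the three conditions one at a time. The identity $\tr A = \tr X = p + x_{22}$ forces $x_{22} = \tr(A) - p$, so $x_{22}$ is determined outright. Writing $\tr A^2 = (\tr A)^2 - 2\det A$ and expanding $\tr X^2 = p^2 + x_{22}^2 + 2 x_{12} x_{21}$, the second condition becomes, after substituting $x_{22} = \tr(A) - p$ and simplifying,
$$
x_{12} x_{21} = p(\tr(A) - p) - \det(A).
$$
Finally, since $\tr XX^* = |p|^2 + |x_{22}|^2 + |x_{12}|^2 + |x_{21}|^2$, the third condition reads
$$
|x_{12}|^2 + |x_{21}|^2 = \|A\|_{HS}^2 - |p|^2 - |\tr(A) - p|^2.
$$

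At this point the whole problem collapses to an elementary existence question: writing $w := p(\tr(A) - p) - \det(A)$ and $R := \|A\|_{HS}^2 - |p|^2 - |\tr(A) - p|^2$, do there exist $x_{12}, x_{21} \in \mathbb{C}$ with $x_{12} x_{21} = w$ and $|x_{12}|^2 + |x_{21}|^2 = R$? The necessity of the stated inequality is immediate from the arithmetic--geometric mean inequality, since $|x_{12}|^2 + |x_{21}|^2 \geq 2|x_{12}||x_{21}| = 2|w|$ gives $R \geq 2|w|$, which is exactly the asserted bound. For sufficiency I would parametrise $x_{12} = \rho_1 e^{i\theta_1}$ and $x_{21} = \rho_2 e^{i\theta_2}$ with $\rho_1, \rho_2 \geq 0$; the phases can be chosen freely to match $\arg w$, so it remains only to solve $\rho_1 \rho_2 = |w|$ and $\rho_1^2 + \rho_2^2 = R$ in nonnegative reals. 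Here $\rho_1^2$ and $\rho_2^2$ are the roots of $u^2 - Ru + |w|^2 = 0$, whose sum $R$ and product $|w|^2$ are nonnegative, so the roots are real and nonnegative exactly when the discriminant $R^2 - 4|w|^2 \geq 0$, i.e.\ when $R \geq 2|w|$.

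The main obstacle is conceptual rather than computational: the point is to recognise that once the three Specht constraints are peeled off, the corner entry is pinned down and the only genuine freedom left lies in the off-diagonal pair $(x_{12}, x_{21})$, whose feasibility is governed entirely by the sharp AM--GM bound relating their product to their sum of squared moduli. Once this reduction is in place, both directions of the equivalence follow at once, and the stated inequality $2|p(\tr(A)-p) - \det(A)| \leq \|A\|_{HS}^2 - |p|^2 - |\tr(A)-p|^2$ is simply the condition $R \geq 2|w|$ written out in full.
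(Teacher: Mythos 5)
Your proposal is correct and follows essentially the same route as the paper: Lemma~\ref{2.1} plus Specht's Theorem reduce membership $p \in W(A)$ to the three trace constraints, which pin down $x_{22}$, force $x_{12}x_{21} = p(\tr(A)-p)-\det(A)$, and leave the Hilbert--Schmidt condition as the only remaining equation. The only cosmetic difference is the final existence step: the paper eliminates $x_{21}$ and minimises $|x_{12}|^2 + |w|^2/|x_{12}|^2$ over $|x_{12}|$, whereas you solve the sum-and-product system via the quadratic $u^2 - Ru + |w|^2 = 0$; both come down to the same AM--GM bound $R \geq 2|w|$.
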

\begin{proof}

Combining Lemma \ref{2.1} and Specht's Theorem \ref{Specht} shows that $p \in W(A)$ if and only if there exist $x_{12}, x_{21}, x_{22} \in \mathbb{C}$ such that
\begin{align}
    \tr(A):= a_{11} + a_{22} &=p+ x_{22} \label{1} \\
    a_{11}^2 + 2a_{12}a_{21} + a_{22}^2 &= p^2 + 2x_{12}x_{21} + x_{22}^2 \label{2}\\
     \|A\|_{HS}^2  = |a_{11}|^2 + |a_{12}|^2 + |a_{21}|^2 + |a_{22}|^2 &= |p|^2+ |x_{12}|^2 + |x_{21}|^2 + |x_{22}|^2. \label{3}
\end{align}
We first assume that there are values of $x$ which satisfy the three equations above, then squaring \eqref{1} and subtracting \eqref{2}, we see \eqref{1} and \eqref{2} is equivalent to \eqref{1} and \begin{equation}\label{2new}
    px_{22}-x_{12} x_{21} = \det A .
\end{equation}
Substituting \eqref{1} and \eqref{2new} into \eqref{3} gives $|p|^2 + |x_{12}|^2  + \left|\frac{px_{22} - \det(A)}{x_{12}}\right|^2 +|\tr(A)-p|^2 = \|A\|_{HS}^2.$ Using \eqref{1} to eliminate $x_{22}$ yields
\begin{equation}\label{allequiv}
    |p|^2 + |x_{12}|^2  + \left|\frac{p(\tr(A)-p) - \det(A)}{x_{12}}\right|^2 +|\tr(A)-p|^2 = \|A\|_{HS}^2.
\end{equation}

On the other hand, starting with equation \eqref{allequiv}, if we set $x_{12}, x_{21}, x_{22}$ such that $\tr(A) - p = x_{22}$ and $x_{21} = \frac{p x_{22} - \det(A)}{x_{12}}$, then we see that \eqref{1} and \eqref{2new} are satisfied. Then substituting these values equation \eqref{allequiv} yields \eqref{3}.

Thus there exist $x_{12}, x_{21}, x_{22} \in \mathbb{C}$ such that equations \eqref{1} \eqref{2} \eqref{3} are satisfied if and only if there exists an $x_{12}$ such that equation \eqref{allequiv} holds. Clearly as $|x_{12}| \rightarrow \infty$ the left hand side of the above tends to infinity, so satisfying \eqref{allequiv} is equivalent to the existence of an $x_{12} \in \mathbb{C}$ such that 
\begin{equation}\label{ineq}
|p|^2 + |x_{12}|^2  + \left|\frac{p(\tr(A)-p) - \det(A)}{x_{12}}\right|^2 +|\tr(A)-p|^2 \leqslant \|A\|_{HS}^2.
\end{equation}
The left hand side of the above is minimised when $|x_{12}|=\sqrt{|p(\tr(A)-p) - \det(A)|}$, which means \eqref{ineq} is equivalent to $2|p(\tr(A)-p) - \det(A)| \leqslant \|A\|_{HS}^2 - |p|^2 -  |\tr(A)-p|^2.$
\end{proof}

Before we use our alternative description of the numerical range given above to recover the classical statement of the Elliptical Range Theorem, we first need a technical lemma.

\begin{Lemma}
    For a $ 2 \times 2$ matrix $A$, $\partial(W(A))$ is an ellipse parametrised by $\phi(t) = a \cos(t) + ib \sin(t)$, $a,b >0$ if and only if $ \{ z^2 : z \in \partial(W(A)) \} $ is an ellipse parametrised by $ s \mapsto \frac{a^2 - b^2}{2} + \frac{a^2 + b^2}{2} \cos(s) + i ab \sin(s)$.
\end{Lemma}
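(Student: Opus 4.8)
The plan is to prove the two directions separately: the forward implication will be a direct computation, while the converse rests on identifying the full preimage of the squared curve under $z \mapsto z^2$ and then invoking the Elliptical Range Theorem (Theorem \ref{2ndproof}).

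For the forward direction I would substitute $\phi(t) = a\cos t + ib\sin t$ into $z\mapsto z^2$ and expand, obtaining $\phi(t)^2 = (a^2\cos^2 t - b^2\sin^2 t) + 2iab\sin t\cos t$. The double-angle identities turn this into $\phi(t)^2 = \frac{a^2-b^2}{2} + \frac{a^2+b^2}{2}\cos 2t + iab\sin 2t$, and reparametrising $s = 2t$ (which sweeps out the same point set, merely traversing it twice) exhibits $\{z^2 : z\in\partial W(A)\}$ as precisely the stated ellipse. This is routine and I would not dwell on it.

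For the converse, the key step is to show that the set $\{w\in\mathbb{C} : w^2\in E'\}$, where $E'$ denotes the target ellipse, equals the curve $E_{a,b} := \{a\cos t + ib\sin t : t\in[0,2\pi]\}$. The forward computation already gives $E' = \{\phi(t)^2 : t \in [0,2\pi]\}$, so $w^2\in E'$ forces $w = \pm\phi(t)$ for some $t$; since $\phi(t+\pi) = -\phi(t)$, the curve $E_{a,b}$ is invariant under $z\mapsto -z$, and hence both square roots $\pm\phi(t)$ lie on $E_{a,b}$. The reverse inclusion is immediate. Granting the hypothesis $\{z^2 : z\in\partial W(A)\} = E'$, every point of $\partial W(A)$ then lies in this preimage, so $\partial W(A)\subseteq E_{a,b}$.

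The hard part will be upgrading this inclusion to an equality, since $z\mapsto z^2$ is two-to-one and a priori $\partial W(A)$ might surject onto $E'$ while being a proper subset of $E_{a,b}$. Here I would invoke Theorem \ref{2ndproof}, which guarantees $\partial W(A)$ is itself the boundary of an ellipse, hence a simple closed curve; a simple closed curve contained in the simple closed curve $E_{a,b}$ must coincide with it (equivalently, two distinct ellipse boundaries meet in at most four points, so infinitely many common points force equality). This yields $\partial W(A) = E_{a,b}$, as claimed. I would also remark that $a,b>0$ are uniquely recoverable from $E'$, so the two parametrisations genuinely involve the same $a,b$, and that the relation $ab = \sqrt{\left(\frac{a^2+b^2}{2}\right)^2 - \left(\frac{a^2-b^2}{2}\right)^2}$ (together with $\frac{a^2+b^2}{2}\ge ab$) confirms that the given parametrisation of $E'$ is indeed that of a bona fide ellipse.
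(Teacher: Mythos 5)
Your forward computation and the containment $\partial(W(A)) \subseteq \{a\cos(t) + ib\sin(t) : t \in [0,2\pi]\}$ in the converse coincide with the paper's argument. The divergence --- and the problem --- is in how you upgrade that containment to an equality. You invoke Theorem \ref{2ndproof} to assert that $\partial(W(A))$ is already the boundary of an ellipse, hence a simple closed curve which must fill out the larger simple closed curve containing it. But this lemma exists precisely to feed the corollary following Theorem \ref{main2}, whose purpose is to \emph{re-derive} the Elliptical Range Theorem from Specht's Theorem; the Remark immediately after the lemma warns that even appealing to convexity of $W(A)$ would be circular here, since proofs of Toeplitz--Hausdorff reduce to the $2\times 2$ case. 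Appealing to Theorem \ref{2ndproof} itself is a still more direct instance of the same circularity: you assume the Elliptical Range Theorem inside a lemma whose role is to help prove it independently. As a free-standing verification of the literal statement your argument is essentially sound (modulo a word ruling out the degenerate case in which $W(A)$ is a segment or a point, so that $\partial(W(A))$ is not a simple closed curve; this is excluded because the image of a segment under $z\mapsto z^2$ lies on a parabola or a line and so cannot exhaust a nondegenerate ellipse), but it cannot serve its intended purpose in Section \ref{2.3} as written.

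The paper closes the gap elementarily. From the hypothesis, for each $s$ at least one of $\pm\bigl(a\cos(s/2) + ib\sin(s/2)\bigr)$ lies in $\partial(W(A))$. Putting $A$ into Schur form $B = \begin{pmatrix}\lambda & y\\ 0 & -\lambda\end{pmatrix}$ (using trace zero), one checks directly that $p = \langle Bv, v\rangle$ forces $-p = \langle Bw, w\rangle$ for the unit vector $w = (-\overline{v_2}, \overline{v_1})^T$, so $W(A) = -W(A)$ and hence both signs lie in $\partial(W(A))$, giving the reverse inclusion. If you want your proof to stand in context, replace the appeal to Theorem \ref{2ndproof} with this symmetry argument, or with any other device that does not presuppose the ellipticity or convexity of $W(A)$.
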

\begin{proof}
    Observe $\phi(t)^2 = a^2 \cos^2(t) - b^2 \sin^2(t) + i2ab\cos(t)\sin(t) = \frac{a^2 - b^2}{2} + \frac{a^2 + b^2}{2} \cos(2t) + i ab \sin(2t)$, where the final equality follows from trigonometric double angle formulas. 
    Thus if $\partial(W(A))$ is an ellipse parametrised by $\phi(t)$, then setting $2t=s$ we see $ \{ z^2 : z \in \partial(W(A)) \} $ is parametrised by $ s \mapsto \frac{a^2 - b^2}{2} + \frac{a^2 + b^2}{2} \cos(s) + i ab \sin(s)$.
    
    We now prove the backward implication. If $$ \{ z^2 : z \in \partial(W(A)) \}  = \left\{ \frac{a^2 - b^2}{2} + \frac{a^2 + b^2}{2} \cos(s) + i ab \sin(s) : s \in [0,2 \pi] \right\},$$ then for all $z \in \partial W(A)$, \begin{equation}\label{star}
        z= +\left(a \cos\left(\frac{s}{2}\right) + ib \sin\left(\frac{s}{2}\right)\right) \, \, \text{   or } z =  -\left(a \cos\left(\frac{s}{2}\right) + ib \sin\left(\frac{s}{2}\right)\right).
    \end{equation} Thus $\partial(W(A))$ is contained in 
    \begin{align}
        &\left\{ +\left(a \cos\left(\frac{s}{2}\right) + ib \sin\left(\frac{s}{2}\right)\right): s \in [0, 2 \pi] \right\} \bigcup \left\{ - \left(a \cos\left(\frac{s}{2}\right) + ib \sin\left(\frac{s}{2}\right)\right): s \in [0, 2 \pi]  \right\} \\
        &= \left\{ a \cos\left(t\right) + ib \sin\left(t\right) :t \in [0,2\pi] \right\}
    \end{align}
     where the final equality holds because $-  \left(a \cos\left(t\right) + ib \sin\left(t\right)\right) = a \cos\left(\pi + t\right) + ib \sin\left(\pi + t\right))$. 

    To show $\{ a \cos(t) + ib \sin(t) :t \in [0,2\pi] \} \subseteq \partial(W(A))$, first note that from \eqref{star}, for each $s \in [0,2\pi]$, either $z = +(a \cos(\frac{s}{2}) + ib \sin(\frac{s}{2})) \in \partial (W(A)) \, \, \text{   or } z =  -(a \cos(\frac{s}{2}) + ib \sin(\frac{s}{2})) \in \partial (W(A))$. Since numerical ranges (and the trace of a matrix) are invariant under unitary equivalences, and since $\tr(A)=0$, Schur's unitary diagonalization Theorem \cite[Theorem 5.4.11]{watkins2004fundamentals} means $W(A) = W(B)$, where $B= \begin{pmatrix}
        \lambda & y \\
        0 & - \lambda
    \end{pmatrix}$
    for some $\lambda, y \in \mathbb{C}$. Observe that $$p = \left\langle \begin{pmatrix}
        \lambda & y \\
        0 & - \lambda
    \end{pmatrix}\begin{pmatrix}
        v_1 \\
        v_2
    \end{pmatrix}, \begin{pmatrix}
        v_1 \\
        v_2
    \end{pmatrix} \right\rangle \in W(A),$$ if and only if
    \begin{equation}\label{-invariant}
        -p = \left\langle\begin{pmatrix}
        \lambda & y \\
        0 & - \lambda
    \end{pmatrix}\begin{pmatrix}
        -\overline{v_2} \\
        \overline{v_1}
    \end{pmatrix}, \begin{pmatrix}
        -\overline{v_2} \\
        \overline{v_1}
    \end{pmatrix} \right\rangle \in W(A) .
    \end{equation}
    So we must have \begin{align}
    &\left\{ +a \cos(\frac{s}{2}) + ib \sin(\frac{s}{2}) : s \in [0,2\pi]\right\} \bigcup \left\{ -(a \cos(\frac{s}{2}) + ib \sin(\frac{s}{2})) :  s \in [0,2\pi] \right\} \\
    &\subseteq \partial(W(A)).\end{align}
\end{proof}

\begin{Remark}
    In the above proof of the backward implication it may be tempting to use convexity to simplify the argument, but since (most) proofs of the Toeplitz-Hausdorff Theorem reduce to showing convexity of the numerical range of a $2 \times 2$ matrix, using convexity of the numerical range would be circular logic. 
\end{Remark}

We can make the following corollary to Theorem \ref{main2}, to recover a version of the Elliptical Range Theorem.

\begin{Corollary}
Let $A$ be a $ 2 \times 2$ matrix. Let $\theta = - \frac{\operatorname{arg}(\det (A - \frac{\operatorname{tr}(A)}{2})}{2} $ and $A_0 = e^{i \theta}(A - \frac{\operatorname{tr}(A)}{2})$. Then $W(A)$ is an ellipse with boundary parametrised by 
$$
t \mapsto \frac{\operatorname{tr}(A)}{2} +  e^{-i\theta}\left(\sqrt{\frac{\|A_0\|_{HS}^2}{4} - \frac{\det(A_0)}{2}} \cos(t) + i \sqrt{\frac{\|A_0\|_{HS}^2}{4} + \frac{\det(A_0)}{2}} \sin(t)\right).
$$ 
\end{Corollary}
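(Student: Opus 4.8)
The plan is to deduce the Corollary directly from the inequality description of $W(A)$ furnished by Theorem~\ref{main2}, after first reducing to the traceless, suitably rotated matrix $A_0$. First I would record the affine relation: since translation and rotation act on numerical ranges by $W\!\big(e^{i\theta}(A - \tfrac{\tr A}{2}I)\big) = e^{i\theta}\big(W(A) - \tfrac{\tr A}{2}\big)$, we have $W(A) = \tfrac{\tr A}{2} + e^{-i\theta}W(A_0)$, so it suffices to identify $W(A_0)$ as the ellipse with boundary $t \mapsto \sqrt{\tfrac{\|A_0\|_{HS}^2}{4} - \tfrac{\det A_0}{2}}\cos t + i\sqrt{\tfrac{\|A_0\|_{HS}^2}{4} + \tfrac{\det A_0}{2}}\sin t$ and then apply the affine map. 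Two features of $A_0$ are crucial: $\tr A_0 = 0$ (because $\tr(A - \tfrac{\tr A}{2}I) = 0$ for $2\times 2$ matrices), and the choice $\theta = -\tfrac12\arg\det(A - \tfrac{\tr A}{2}I)$ forces $\det A_0 = e^{2i\theta}\det(A - \tfrac{\tr A}{2}I) = |\det(A - \tfrac{\tr A}{2}I)|$, a nonnegative real number.

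Next I would specialise Theorem~\ref{main2} to $A_0$. Setting $\tr A_0 = 0$ collapses the defining inequality to
\[
2\,\big|p^2 + \det A_0\big| \;\le\; \|A_0\|_{HS}^2 - 2|p|^2 .
\]
Writing $d := \det A_0 \ge 0$, $N := \|A_0\|_{HS}^2$, and $p = x + iy$, I would square both sides. The key algebraic simplification is the identity $(x^2 - y^2)^2 + 4x^2y^2 = (x^2 + y^2)^2$, which cancels the quartic terms and reduces the squared inequality to
\[
4(N + 2d)\,x^2 + 4(N - 2d)\,y^2 \;\le\; (N - 2d)(N + 2d),
\]
i.e. $\tfrac{4x^2}{N - 2d} + \tfrac{4y^2}{N + 2d} \le 1$. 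This is the filled ellipse with semi-axis $\sqrt{\tfrac{N}{4} - \tfrac{d}{2}}$ in the real direction and $\sqrt{\tfrac{N}{4} + \tfrac{d}{2}}$ in the imaginary direction, which is exactly the claimed parametrisation of $\partial W(A_0)$. Applying $W(A) = \tfrac{\tr A}{2} + e^{-i\theta}W(A_0)$ then rotates and translates this ellipse into the stated form.

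The main obstacle is not the algebra but the careful bookkeeping around the squaring step, since squaring $2|p^2+d| \le N - 2|p|^2$ is reversible only where the right-hand side is nonnegative. Here Schur's inequality enters: for the traceless matrix $A_0$ with eigenvalues $\lambda_1 = -\lambda_2$ one has $N = \|A_0\|_{HS}^2 \ge |\lambda_1|^2 + |\lambda_2|^2 = 2|\det A_0| = 2d$, so $N - 2d \ge 0$ (making the ellipse equation meaningful) and every $p$ in the filled ellipse satisfies $|p|^2 \le \tfrac{N+2d}{4}$, whence $N - 2|p|^2 \ge \tfrac{N-2d}{2} \ge 0$. This shows the original inequality and its square cut out the same region, so no extraneous points are introduced. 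I would then separately dispatch the degenerate cases $N = 2d$ (where $A_0$ is normal and the ellipse collapses to a segment) and $d = 0$ (where $\arg\det$ is undefined, but any choice of $\theta$ yields the correct disk), after which the equivalence of the two descriptions, and hence the parametrisation, is complete.
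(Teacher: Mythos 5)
Your proposal is correct, and while it starts from the same place as the paper --- specialising the inequality of Theorem~\ref{main2} to the traceless, rotated matrix $A_0$ with $\det A_0 = |\det(A - \tfrac{\tr(A)}{2})| \ge 0$, and finishing with the same affine relation --- the middle step is genuinely different. The paper rewrites the condition as $|p^2 + \det(A_0)| + |p^2| \le \tfrac{\|A_0\|_{HS}^2}{2}$ and reads it as the two-focus (locus-of-points) definition of an ellipse in the variable $w = p^2$; it then needs the preceding technical lemma to transport the ellipse in $\{p^2 : p \in \partial W(A_0)\}$ back to an ellipse in $p$, and the backward implication of that lemma in turn requires the symmetry $p \in W(A_0) \Rightarrow -p \in W(A_0)$ established via the Schur form. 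You instead square the inequality $2|p^2 + d| \le N - 2|p|^2$ directly and use $(x^2-y^2)^2 + 4x^2y^2 = (x^2+y^2)^2$ to land on $\tfrac{4x^2}{N-2d} + \tfrac{4y^2}{N+2d} \le 1$ (your algebra checks out), with Schur's inequality $N \ge 2d$ guaranteeing both that the ellipse is nondegenerate and that the squaring step is reversible on the filled ellipse. What your route buys is self-containment: you bypass the $z \mapsto z^2$ lemma and its symmetry argument entirely, identifying $W(A_0)$ as the filled ellipse in one computation; what you give up is the coordinate-free focal picture, which the paper's version makes visible and which is the conceptual point of passing to $p^2$. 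Your attention to the degenerate cases $N = 2d$ and $d = 0$ is a detail the paper's proof glosses over.
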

\begin{proof}
Since $\operatorname{tr}(A_0) = 0$ and $\det A_0 \in \mathbb{R}$ Theorem \ref{main2} rearranges to $ W(A_0) = \left\{ p \in \mathbb{C} : |p^2 + \det(A_0)| + |p^2| \leqslant \frac{\|A_0\|_{HS}^2}{2} \right\}$. Thus $p^2$ is a locus of points with Foci given by $(- \det(A_0), 0)$ and major axis $\frac{\|A_0\|_{HS}^2}{2}$. Hence, by the locus of points definition of an ellipse,  $\{ p^2 : p \in W(A_0) \}$ is an ellipse with boundary parametrised by $-\frac{\det(A_0)}{2} + \frac{\|A_0\|_{HS}^2}{4} \cos(t) + i \sqrt{\frac{\|A_0\|_{HS}^4 - 4\det(A_0)^2}{16}} \sin(t)$ and by the previous lemma, $W(A_0)$ is an ellipse with boundary parametrised by 
$$
t \mapsto \sqrt{\frac{\|A_0\|_{HS}^2}{4} - \frac{\det(A_0)}{2}} \cos(t) + i \sqrt{\frac{\|A_0\|_{HS}^2}{4} + \frac{\det(A_0)}{2}} \sin(t).
$$
The general case now follows via the linearity relation $W(A_0) = e^{i \theta}W(A) - e^{i \theta}\tr(A)$.
\end{proof}

\begin{Remark}
    The method of proof for Theorem \ref{main2} generalises to the case of $3 \times 3$ matrices. Indeed, for a $3 \times 3$ matrix $A$, we have $p \in W(A)$ if and only if $A$ is unitarily equivalent to a matrix with a $p$ in the top left entry. Furthermore part $(b)$ of Theorem \ref{Specht} gives 7 trace equalities which are equivalent to $3 \times 3$ matrices being unitarily equivalent. The difficulty in generalising the approach of this section to the $ 3 \times 3$ case is handling the seven trace inequalities (analogous to \eqref{1}--\eqref{3} in the \(2\times 2\) case) in order to extract a geometric description of the numerical range.
 
\end{Remark}

\begin{section}{$C$-Numerical Range Description}\label{3n}
For an $n$-by-$n$ matrix $A$, let $\mathcal{U}(A) := \{U^*AU :  U \text{ is unitary} \}$ be the unitary orbit of $A$. Then for an $n$-by-$n$ matrix $C$, the $C$-\textit{numerical range} of $A$ is 
$$
W_C(A) = \{\tr(CX) : X \in \mathcal{U}(A) \}.
$$

The $C$-numerical range generalises the numerical range in the sense that if $C$ is a matrix with a $1$ in the top left entry and 0 elsewhere, we recover the classical numerical range. $C$-numerical ranges have been extensively studied, and we refer the reader to \cite{li1994c} for a survey on the topic.

\begin{subsection}{$2 \times 2$ $C$-Numerical Range Description}\label{3.1}

The $C$-numerical range of a $2 \times 2$ matrix was first shown to be an ellipse in \cite{nakazato1994c}, and a subsequent proof was given in \cite{kwong1996some}. The following proof partially uses the argument made in \cite{kwong1996some}, but we are able to extend the result in \cite{kwong1996some} by explicitly identifying the ellipse that represents the $C$-numerical range of an arbitrary matrix.
\begin{Theorem}
    Let $A$ and $C$ be $2 \times 2$ matrices. Let $\theta_1 = \frac{\pi - \operatorname{arg}(\det (A - \frac{\operatorname{tr}(A)}{2}))}{2} $, $A_0 = e^{i \theta_1}(A - \frac{\operatorname{tr}(A)}{2})$, $\theta_2 = \frac{\pi - \operatorname{arg}(\det (C - \frac{\operatorname{tr}(C)}{2}))}{2} $ and $C_0 = e^{i \theta_2}(C - \frac{\operatorname{tr}(C)}{2})$. Then $W_C(A)$ is the ellipse with boundary parametrised by
    \begin{equation}\label{1stC}
    t \mapsto  \frac{\operatorname{tr}(A) \operatorname{tr}(C)}{2} + e^{-i(\theta_1 +  \theta_2)}(K_1  \cos (t) + i K_2 \sin (t)),
    \end{equation}
    where $K_1^2$ is given by
$$
        \|A_0\|_{HS}^2\,\|C_0\|_{HS}^2
+\sqrt{\|A_0\|_{HS}^4-4(\det A_0)^2}\;
 \sqrt{\|C_0\|_{HS}^4-4(\det C_0)^2}
+4\,\det(A_0)\,\det(C_0),
$$
and $K_2^2$ is given by
$$ \|A_0\|_{HS}^2\,\|C_0\|_{HS}^2
+\sqrt{\|A_0\|_{HS}^4-4(\det A_0)^2}\;
 \sqrt{\|C_0\|_{HS}^4-4(\det C_0)^2}
-4\,\det(A_0)\,\det(C_0).
$$
\end{Theorem}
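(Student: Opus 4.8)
The plan is to follow the reduction strategy of Theorem~\ref{2ndproof}: strip off the traces and a global phase, reduce both $A$ and $C$ to canonical off-diagonal form, and then compute the resulting trace explicitly as a combination of two Joukowsky transforms. First I would remove the traces: writing $A=\tfrac{\operatorname{tr}A}{2}I+A'$ and $C=\tfrac{\operatorname{tr}C}{2}I+C'$ with $A',C'$ traceless, a direct expansion using $\operatorname{tr}I=2$ and $\operatorname{tr}A'=\operatorname{tr}C'=0$ gives $\operatorname{tr}(CU^*AU)=\tfrac{\operatorname{tr}A\,\operatorname{tr}C}{2}+\operatorname{tr}(C'U^*A'U)$, which produces the additive constant $\tfrac{\operatorname{tr}A\,\operatorname{tr}C}{2}$. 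Since $A'=e^{-i\theta_1}A_0$ and $C'=e^{-i\theta_2}C_0$, the variable part factors as $e^{-i(\theta_1+\theta_2)}\operatorname{tr}(C_0U^*A_0U)$, so it remains to describe $\{\operatorname{tr}(C_0U^*A_0U):U\text{ unitary}\}$, where $A_0,C_0$ are traceless with real non-positive determinant. As in the proof of Theorem~\ref{2ndproof}, $A_0$ is unitarily equivalent to $\hat{A}_0=\begin{pmatrix}0&b\\c&0\end{pmatrix}$ and $C_0$ to $\hat{C}_0=\begin{pmatrix}0&b'\\c'&0\end{pmatrix}$ with $b\ge c\ge0$ and $b'\ge c'\ge0$; writing $A_0=V_1^*\hat{A}_0V_1$, $C_0=V_2^*\hat{C}_0V_2$ and substituting $W=V_1UV_2^*$ (which ranges over all unitaries as $U$ does) reduces the problem to describing $\{\operatorname{tr}(\hat{C}_0W^*\hat{A}_0W):W\text{ unitary}\}$, where both matrices are now real and off-diagonal.

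Next I would compute this trace explicitly. Parametrising $W=\begin{pmatrix}a&-\bar d\\ d&\bar a\end{pmatrix}$ with $|a|^2+|d|^2=1$ (the overall phase of $W$ is irrelevant) and writing $a=\cos\theta\,e^{i\mu}$, $d=\sin\theta\,e^{i\nu}$, a direct computation yields
\[
\operatorname{tr}(\hat{C}_0W^*\hat{A}_0W)=\cos^2\theta\,J_{cb',bc'}(e^{2i\mu})-\sin^2\theta\,J_{bb',cc'}(e^{2i\nu}).
\]
By the corollary to Theorem~\ref{2ndproof} each Joukowsky range is the boundary of an ellipse, and since $b,c,b',c'$ are real and non-negative both ellipses are centred at the origin with axes along the coordinate axes. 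This is precisely the setting of Theorem~\ref{3by3J}(b), with the simplification that here the two component ellipses share the same pair of axes, so no principal-axis rotation is needed.

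I would then identify the ellipse by a support-function argument. Exactly as in Theorem~\ref{3by3J}(b), the set is a union over $\theta$ of Minkowski sums of the two scaled ellipse boundaries, so its support function is $\max_\theta\big(\cos^2\theta\,h_{E_1}(\phi)+\sin^2\theta\,h_{E_2}(\phi)\big)=\max\big(h_{E_1}(\phi),h_{E_2}(\phi)\big)$, where $E_1=\operatorname{conv}(\operatorname{ran}J_{cb',bc'})$ and $E_2=\operatorname{conv}(\operatorname{ran}J_{bb',cc'})$. The elementary inequalities $(b-c)(b'-c')\ge0$ and $(b^2-c^2)(b'^2-c'^2)\ge0$ show $E_1\subseteq E_2$, so the support function collapses to $h_{E_2}$, which by Lemma~\ref{ellipsesupport} is the support function of a single axis-aligned ellipse with semi-axes $bb'+cc'$ and $bb'-cc'$. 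Re-expressing these semi-axes through the unitary invariants $\|A_0\|_{HS}^2=b^2+c^2$, $\det A_0=-bc$ (and the analogues for $C_0$) via the relations in \eqref{useforC} yields closed-form expressions in these invariants, giving $K_1,K_2$, while the surviving factor $e^{-i(\theta_1+\theta_2)}$ supplies the stated rotation and $\tfrac{\operatorname{tr}A\,\operatorname{tr}C}{2}$ the centre.

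The two steps I expect to demand the most care are the following. First, the support-function computation only identifies $\operatorname{conv}(W_C(A))$; to conclude that $W_C(A)$ is the full elliptical disk $E_2$ rather than merely its boundary, one must show that the union of the Minkowski-sum curves actually fills $E_2$ as $\theta$ runs from $0$ to $\tfrac{\pi}{2}$, which is where the convexity of the $2\times2$ $C$-numerical range is genuinely being established. Second, the algebra converting $bb'\pm cc'$ into the symmetric expressions for $K_1^2$ and $K_2^2$ requires carefully tracking the constant factors; this is the most error-prone part of the computation and the point at which the precise numerical constants in the stated semi-axes are pinned down.
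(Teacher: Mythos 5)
Your proposal is correct and follows essentially the same route as the paper: reduce $A$ and $C$ to traceless off-diagonal canonical forms, compute $\operatorname{tr}(\hat C_0W^*\hat A_0W)$ as a convex combination of two Joukowsky transforms, observe that the smaller ellipse is contained in the larger via $(b-c)(b'\pm c')\ge 0$, and convert the semi-axes $bb'\pm cc'$ into the unitary invariants. The one step you flag but leave open --- that the union of the curves fills the whole elliptical disk rather than just giving the right convex hull --- is closed in the paper by freezing the phases so that one Joukowsky term degenerates to a fixed point of the larger ellipse; in your parametrisation this amounts to fixing $\mu$ and letting $\nu$ and $\theta$ vary, so that the set visibly contains every convex combination of a fixed point of $E_2$ with all of $\partial E_2$, hence all of $E_2$.
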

\begin{proof}
As noted in the proof of Theorem \ref{2ndproof}, $A_0$ and $C_0$ are unitarily equivalent to $s_1\begin{pmatrix}
    0 & b_1 \\
    c_1 & 0
\end{pmatrix}$ and $s_2\begin{pmatrix}
    0 & b_2 \\
    c_2 & 0
\end{pmatrix}$ respectively, where $s_i = \pm 1$ and $b_i \geq c_i \geq 0$. We first show $W_{C_0}(A_0) = \operatorname{conv}(\ran J_{c_1c_2, b_1b_2})$. We first consider when $s_1 = s_2 =1$. Since every $ 2 \times 2$ unitary matrix is of the form $\lambda \begin{pmatrix}
    \cos (\theta) & e^{i \phi_1} \sin (\theta) \\
    e^{i \phi_2} \sin (\theta) & -e^{i (\phi_1+ \phi_2)} \cos (\theta)
\end{pmatrix}$ for $|\lambda| =  1$, and $\theta, \phi_1, \phi_2 \in [0, 2 \pi]$ and $C$-numerical ranges are invariant under unitary equivalences (Proposition 2.1 in \cite{gau2021numerical}), directly computing the trace of
$$
\begin{pmatrix}
    0 & b_2 \\
    c_2 & 0
\end{pmatrix} \begin{pmatrix}
    \cos (\theta) & e^{-i \phi_2} \sin (\theta) \\
     e^{-i \phi_1} \sin (\theta)& -e^{-i (\phi_1+ \phi_2)} \cos (\theta)
\end{pmatrix}   \begin{pmatrix}
    0 & b_1 \\
    c_1 & 0
\end{pmatrix}\begin{pmatrix}
    \cos (\theta) & e^{i \phi_1} \sin (\theta) \\
    e^{i \phi_2} \sin (\theta) & -e^{i (\phi_1+ \phi_2)} \cos (\theta)
\end{pmatrix}
$$
gives $W_{C_0}(A_0) $ as
\begin{equation}\label{Cmain}
\{ \sin^2 (\theta) J_{c_1c_2,b_1b_2}(e^{i( \phi_1 - \phi_2)}  ) +  \cos^2( \theta) J_{c_2b_1, b_2c_1} (-e^{i (\phi_1 +\phi_2) }):\theta, \phi_1, \phi_2 \in [0, 2 \pi] \}.
\end{equation}
Thus every point in $W_{C_0}(A_0)$ is a convex combination of a point in $ \ran J_{c_1c_2, b_1b_2}$ and $ \ran J_{c_2b_1, b_2c_1}$. Since $b_i \geq c_i$ we have \begin{align}
&c_1 c_2 + b_1 b_2 - (c_2 b_1 + b_2 c_1)
= b_1 b_2 - b_1 c_2 - b_2 c_1 + c_1 c_2
= (b_1 - c_1)(b_2 - c_2) \geq 0 \\
&(b_1 b_2 - c_1 c_2)^2 - (b_1 c_2 - b_2 c_1)^2
= (b_1 - c_1)(b_2 - c_2)(b_1 + c_1)(b_2 + c_2)
\geq 0,
\end{align}
thus from \eqref{Jtransform} we see the major and minor axis length of the ellipse $\operatorname{conv}(\ran J_{c_1c_2, b_1b_2})$ are greater than $\operatorname{conv}(\ran J_{c_2b_1, b_2c_1}) $ and thus $\ran J_{c_2b_1, b_2c_1}\subseteq \operatorname{conv}(\ran J_{c_1c_2, b_1b_2})$, and consequently $W_{C_0}(A_0) \subseteq \operatorname{conv}(\ran J_{c_1c_2, b_1b_2})$. Conversely setting $\phi_1 =i \frac{\zeta}{2}, \,  \phi_2 = -i \frac{\zeta}{2}$ in \eqref{Cmain} gives
$$
\{\sin^2(\theta) J_{c_1c_2, b_1b_2}(e^{i\zeta}) + \cos^2 (\theta) (-b_2c_1 - c_2b_1): \zeta, \theta, \in [0, 2 \pi]\}
$$
As $-b_2c_1 - c_2b_1 = J_{c_2b_1, b_2c_1}(-1) \in \operatorname{conv}(\ran J_{c_1c_2, b_1b_2})$, the above is 
$$
\operatorname{conv}(\ran J_{c_1c_2, b_1b_2}, -b_2c_1 - c_2b_1) = \operatorname{conv}(\ran J_{c_1c_2, b_1b_2}).
$$
Thus $W_{C_0}(A_0) = \operatorname{conv}(\ran J_{c_1c_2, b_1b_2})$. As $W_{C_0}(A_0)$ is symmetric in the imaginary axis the same formula will hold if we initially chose $s_1=-1$, or $s_2= -1.$

We now show that $c_1c_2 + b_1b_2 = K_1$ and $|c_1c_2 - b_1b_2| = K_2$.
Denote
\begin{align}\label{st}
&s_1^2=\frac{\|A_0\|_{HS}^2}{4}-\frac{\det A_0}{2}, \, \, \, 
t_1^2=\frac{\|A_0\|_{HS}^2}{4}+\frac{\det A_0}{2} \\
&s_2^2=\frac{\|C_0\|_{HS}^2}{4}-\frac{\det C_0}{2}, \, \, \, 
t_2^2=\frac{\|C_0\|_{HS}^2}{4}+\frac{\det C_0}{2},
\end{align}
Then \eqref{useforC} says \(b_1=s_1+t_1\), \(c_1=s_1-t_1\), \(b_2=s_2+t_2\), \(c_2=s_2-t_2\). So $b_1b_2+c_1c_2 =2(s_1s_2+t_1t_2)$ and $b_1b_2-c_1c_2 =2(s_1t_2+t_1s_2).$ Thus,
\begin{align}
(b_1b_2+c_1c_2)^2+(b_1b_2-c_1c_2)^2
&=4\!\left[(s_1s_2+t_1t_2)^2+(s_1t_2+t_1s_2)^2\right]\\
&=4(s_1^2+t_1^2)(s_2^2+t_2^2)+16\,s_1t_1s_2t_2\\
&=\|A_0\|_{HS}^2\,\|C_0\|_{HS}^2
\;+\;16\,s_1t_1s_2t_2. \label{Kadd1}
\end{align}
Similarly \begin{equation}\label{Kadd2}
    (b_1b_2+c_1c_2)^2-(b_1b_2-c_1c_2)^2 =4\,\det(A_0)\,\det(C_0).
\end{equation}
Adding \eqref{Kadd1} and \eqref{Kadd2} gives the expression for $K_1$ and subtracting \eqref{Kadd1} and \eqref{Kadd2} gives the expression for $K_2$.

The relations $W_C(A)= W_A(C)$ and $W_C(\alpha A + \beta I) =\alpha W_C(A) + \beta \operatorname{tr}(C)$ for all $\alpha , \beta \in \mathbb{C}$ (which can be found as Proposition 2.1 in \cite{gau2021numerical}) imply that $W_C(A) = e^{-i ( \theta_1 + \theta_2)}W_{C_0}(A_0) + \frac{\operatorname{tr}(A) \operatorname{tr}(C)}{2}$, and so $W_C(A)$ is given by \eqref{1stC}.
\end{proof}
\end{subsection}

\begin{subsection}{Rank 1 $C$-Numerical Ranges}\label{3.2}

The influential union of disks formula for the $q$-numerical range was first shown as Lemma 5 in \cite{tsing1984constrained}. Since $q$-numerical ranges are rank 1 $C$-numerical ranges, Theorem \ref{uniondisk} below may be viewed as a multidimensional generalisation of the union of disks formula. We remark that the proof of Theorem \ref{uniondisk} uses a different approach to that in \cite{tsing1984constrained}.

It can be shown that every rank one matrix $C$ is unitarily equivalent to a matrix with all rows equal to 0 apart form the first row. As previously mentioned, Proposition 2.1 in \cite{gau2021numerical} shows that if $C_1$ and $C_2$ are unitarily equivalent then $W_{C_1}(A) = W_{C_2}(A)$. Since the previous theorem characterises the $C$-numerical ranges of $2 \times 2$ matrices we restrict ourself to the case of $n \times n$ matrices where $n \geq 3$. Thus to characterise $C$-numerical ranges of matrices with $\operatorname{rank}C = 1$ one may assume the hypothesis in the following theorem.

    \begin{Theorem}\label{uniondisk}
        Let $A$ and $C$ be $n$-by-$n$ matrices where $n\geq 3$ and let the $i,j$-th entry of $C$ be denoted $c_{ij}$. If $c_{ij} = 0 $ for $i \geq 2$, then $W_C(A)$ is given by
        \begin{equation}\label{rank1Cfinal}
            \left\{ c_{11} \langle Ah,h \rangle + y : \|h\| = 1 , \text{ and }  |y|^2 \leq (\|Ah\|^2- |\langle Ah,h\rangle|^2) \left(\sum_{j=2}^n|c_{1j}|^2 \right) \right\}.
        \end{equation}
    \end{Theorem}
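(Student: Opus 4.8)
The plan is to unpack the trace $\tr(CX)$ for $X = U^*AU$ in terms of the orthonormal columns of $U$, exploiting that $C$ has only its first row nonzero. First I would write $X = U^*AU$ and note that $\tr(CX) = \sum_{i,j} c_{ij} X_{ji}$; since $c_{ij}=0$ for $i\geq 2$, only the terms with $i=1$ survive, giving $\tr(CX) = \sum_{j=1}^n c_{1j} X_{j1}$. If I write $U = (\,h \mid u_2 \mid \cdots \mid u_n\,)$ for an orthonormal basis $h, u_2, \ldots, u_n$ of $\C^n$, then $X_{j1} = \langle A h, w_j\rangle$ where $w_1 = h$ and $w_j = u_j$ for $j\geq 2$. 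Hence
\[
\tr(CX) = c_{11}\langle Ah,h\rangle + \sum_{j=2}^n c_{1j}\langle Ah, u_j\rangle.
\]
As $U$ ranges over all unitaries, $h$ ranges over all unit vectors and $\{u_2,\ldots,u_n\}$ ranges over all orthonormal bases of $(\vecspan h)^\perp$. So $W_C(A)$ is exactly the set of values $c_{11}\langle Ah,h\rangle + y$ where $y = \sum_{j=2}^n c_{1j}\langle Ah,u_j\rangle$, and the task reduces to identifying, for each fixed $h$, the set of achievable $y$.

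The key step is thus a constrained optimization: for fixed unit $h$, I would determine $\{\, y = \sum_{j=2}^n c_{1j}\langle Ah, u_j\rangle : \{u_j\}_{j\geq 2} \text{ orthonormal in } (\vecspan h)^\perp \,\}$. Decompose $Ah = \langle Ah,h\rangle h + v$, where $v \in (\vecspan h)^\perp$ is the component orthogonal to $h$, so that $\langle Ah,u_j\rangle = \langle v, u_j\rangle$ and $\|v\|^2 = \|Ah\|^2 - |\langle Ah,h\rangle|^2$. The claim is that the achievable $y$ fill out the closed disk of radius $\|v\|\,\bigl(\sum_{j=2}^n |c_{1j}|^2\bigr)^{1/2}$ centered at the origin, which is precisely the constraint $|y|^2 \leq (\|Ah\|^2 - |\langle Ah,h\rangle|^2)\sum_{j=2}^n|c_{1j}|^2$ in \eqref{rank1Cfinal}. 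The upper bound is Cauchy--Schwarz: setting $\xi_j = \langle v,u_j\rangle$, Bessel's inequality gives $\sum_j |\xi_j|^2 \leq \|v\|^2$, and then $|y| = |\sum_j c_{1j}\xi_j| \leq (\sum_j|c_{1j}|^2)^{1/2}(\sum_j|\xi_j|^2)^{1/2} \leq (\sum_j|c_{1j}|^2)^{1/2}\|v\|$.

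The main obstacle, and the part requiring genuine care, is surjectivity: showing every $y$ in that disk is actually attained. This is where $n\geq 3$ enters, since with at least two orthonormal directions $u_2, u_3$ available in $(\vecspan h)^\perp$ one has enough freedom to realize arbitrary complex phases, whereas the rank-one-$C$ structure alone would not suffice in lower dimensions. To attain a prescribed $y$ with $|y|\leq \|v\|(\sum_j|c_{1j}|^2)^{1/2}$, I would first handle the degenerate case $v=0$ (then $y=0$ is forced and the disk is a point) and the case where all $c_{1j}=0$ for $j\geq 2$ separately. Otherwise, I would choose the orthonormal frame $\{u_j\}$ so that the induced vector $(\xi_j)_j = (\langle v,u_j\rangle)_j$ can be steered: by equality analysis in Cauchy--Schwarz, the maximum $|y|=\|v\|(\sum|c_{1j}|^2)^{1/2}$ is hit by aligning $\xi_j$ proportionally to $\overline{c_{1j}}$, which is feasible since one may rotate the frame within $(\vecspan h)^\perp$ to make $v$ point in any direction relative to the $u_j$; intermediate values of $|y|$ (and all phases) are then obtained by continuously interpolating the frame, using connectedness of the unitary group acting on $(\vecspan h)^\perp$. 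Assembling the fiberwise disks over all unit vectors $h$ yields exactly the set \eqref{rank1Cfinal}.
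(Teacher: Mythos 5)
Your reduction of $\tr(CX)$ to $c_{11}\langle Ah,h\rangle+\sum_{j\geq 2}c_{1j}\langle Ah,u_j\rangle$ and the fiberwise analysis over unit vectors $h$ is exactly the paper's strategy, and your upper bound is fine (in fact Parseval gives equality $\sum_{j\geq 2}|\xi_j|^2=\|v\|^2$ rather than Bessel's inequality, since $\{u_j\}_{j\geq2}$ is a full orthonormal basis of $(\vecspan h)^{\perp}$ and $v$ lies in that subspace). The step that does not hold up as stated is the surjectivity argument. Rotating the frame does show that the achievable coefficient vectors $\xi=(\langle v,u_j\rangle)_{j\geq2}$ fill the whole sphere $\{\|\xi\|=\|v\|\}$ in $\C^{n-1}$; that part is correct, and it is what the paper establishes with an explicit unitary $W$ on $(\vecspan h)^{\perp}$ carrying $v/\|v\|$ to a prescribed unit vector. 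But your final appeal to connectedness to conclude that $y=\sum_{j}c_{1j}\xi_j$ sweeps the entire closed disk is insufficient: a connected subset of the disk containing the boundary circle and the centre need not be the whole disk (the circle together with a single radius is such a set), and a path of frames joining the frame realising $y=0$ to one realising a boundary value has no reason to keep the phase of $y$ fixed along the way.

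The gap is easy to close, and the closing move is precisely where $n\geq3$ enters (this is also the paper's last step): writing $c=(c_{12},\dots,c_{1n})$ and decomposing $\xi=\alpha\,\overline{c}/\|c\|+\eta$ with $\eta$ orthogonal to $\overline{c}$, one gets $y=\alpha\|c\|$, and since $n-1\geq2$ there is a nonzero admissible $\eta$, so $|\alpha|$ may be taken anywhere in $[0,\|v\|]$ with $\|\eta\|^2=\|v\|^2-|\alpha|^2$; every $\xi$ so produced lies on the sphere and hence is attainable by the frame-rotation argument. Note also that your stated reason for requiring $n\geq3$ (``enough freedom to realize arbitrary complex phases'') is slightly off: for $n=2$ all phases are already attained, but only at maximal modulus, so one obtains the boundary circle rather than the disk; the extra dimension is needed to reach the interior.
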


\begin{proof}
We first prove that $W_C(A)$ is given by \begin{equation}\label{rank1C}
         \left\{ c_{11} \langle Ah,h \rangle + \left\langle \begin{pmatrix}
            c_{12} \\
            c_{13} \\
            \vdots \\
            c_{1n}
        \end{pmatrix}, \begin{pmatrix}
            x_{2} \\
            x_{3} \\
            \vdots \\
            x_{n}
        \end{pmatrix} \right\rangle : \|h\| = 1 , \text{ and }  \sum_{i=2}^n|x_i|^2 = \|Ah\|^2- |\langle Ah,h\rangle|^2 \right\}
        \end{equation}

Observe that $\mathcal{U}(A) = \left\{(\langle Ah_i,h_j \rangle ) : h_1, h_2, \ldots h_n \text{ is an o.n.b for } \mathbb{C}^n  \right\}$. To show the $\subseteq $ inclusion, note that $p \in W_C(A)$ is given by $$
p = c_{11} \langle Ah,h\rangle +  \sum_{j=2}^n c_{1j}\langle Ah,h_j\rangle , \, \, \, \, h,h_2,\ldots , h_n \, \text{is an o.n.b for } \mathbb{C}^n 
$$
which is of the form given in \eqref{rank1C}, since $\sum_{j=2}^n |\langle Ah,h_j\rangle|^2 = \|Ah\|^2 - |\langle Ah,h\rangle |^2$.

To show the $\supseteq$ inclusion it suffices to show that given any $h \in \mathbb{C}^n$ of norm 1 and $x=\begin{pmatrix}
        x_2 \\
        x_3 \\
        \vdots \\
        x_n
    \end{pmatrix} \in \mathbb{C}^{n-1}$ such that $\|x\|^2 = \|Ah\|^2 - |\langle Ah,h\rangle |^2$, there exists an orthonormal basis $h_1, h_2 \ldots h_n$ such that $\sum_{j=1}^n c_{1j}\langle Ah_1,h_j \rangle  = c_{11} \langle Ah,h \rangle + \left\langle \begin{pmatrix}
            c_{12} \\
            c_{13} \\
            \vdots \\
            c_{1n}
        \end{pmatrix}, \begin{pmatrix}
            x_{2} \\
            x_{3} \\
            \vdots \\
            x_{n}
        \end{pmatrix} \right\rangle$. This can be achieved by setting $h_1 = h$ and constructing $h_2, h_3, \ldots h_n$ so that $\langle Ah_1,h_j\rangle = \overline{x_j}$ as follows. Pick a arbitrary orthonormal basis $\tilde{h_j} \, \,  \,  2 \leq j \leq n $ of $h_1^{\perp}$. Set
\[
z = Ah_1 - \langle Ah_1, h_1 \rangle h_1 \in h_1^{\perp}, 
\qquad 
b := \sum_{i=2}^n \overline{x_i} \tilde{h_i} \in h_1^{\perp},
\]
so $\|b\| = \|x\| = \|z\|$.

Set $g_1 := \frac{z}{\|z\|}, f_1 := \frac{b}{\|b\|} $ and extend $\{g_1\}$ and $\{f_1\}$ to orthonormal bases of $h_1^{\perp}$ respectively.
Define the unitary map $W:h_1^{\perp} \to h_1^{\perp}$ by $W g_j = f_j$ for all $j$, and then
\[
W z = W(\|z\| g_1) = \|z\| f_1 = b.
\]
Set $h_j = W^* \tilde{h_j}$ for $j\geq 2$, so $\langle Ah_1, h_j\rangle = \langle Wz, \tilde{h_j} \rangle  = \overline{x_j}$. Thus we have proved \eqref{rank1C}.

Equation \eqref{rank1Cfinal} follows from \eqref{rank1C} since for any $ y \in \mathbb{C}$ with $$|y|^2 \leq (\|Ah\|^2- |\langle Ah,h\rangle|^2) \left( \sum_{j=2}^n|c_{ij}|^2 \right)$$ is of the form 
$$
y = \left\langle \begin{pmatrix}
            c_{12} \\
            c_{13} \\
            \vdots \\
            c_{1n}
        \end{pmatrix}, \begin{pmatrix}
            x_{2} \\
            x_{3} \\
            \vdots \\
            x_{n}
        \end{pmatrix} \right\rangle  
$$where $$
\begin{pmatrix}
    x_2 \\
    x_3 \\
    \vdots \\
    x_n
\end{pmatrix} =  \left( r_1 \begin{pmatrix}
            c_{12} \\
            c_{13} \\
            \vdots \\
            c_{1n}
        \end{pmatrix}+ r_2 \begin{pmatrix}
            d_{12} \\
            d_{13} \\
            \vdots \\
            d_{1n}
        \end{pmatrix} \right), \quad \sum_{i=2}^n|x_i|^2 = \|Ah\|^2- |\langle Ah,h\rangle|^2,
$$
$\begin{pmatrix}
            d_{12} \\
            d_{13} \\
            \vdots \\
            d_{1n}
        \end{pmatrix} \in \begin{pmatrix}
            c_{12} \\
            c_{13} \\
            \vdots \\
            c_{1n}
        \end{pmatrix}^{\perp}$ and $ r_1, r_2 \in \mathbb{C}$ are appropriately chosen.
\end{proof}
One can deduce the following corollary from the previous theorem.
\begin{Corollary}
    Let $C_1$ and $C_2$ be $n$-by-$n$ matrices for $n\geq 3$ where $C_1 = (c_{ij}^1)$,  $C_2 = (c_{ij}^2)$, and $c_{ij}^1 = 0, \, c_{ij}^2 = 0 $ for $i \geq 2$. If $c_{11}^1 = c_{11}^2$ and $\sum_{j=2}^n|c_{1j}^1|^2  = \sum_{j=2}^n|c_{1j}^2|^2 $, then $W_{C_1}(A) = W_{C_2}(A)$. 
\end{Corollary}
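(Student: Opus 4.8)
The plan is to obtain this as an immediate consequence of Theorem~\ref{uniondisk}, the key observation being that the explicit description \eqref{rank1Cfinal} of $W_C(A)$ depends on $C$ only through the scalar $c_{11}$ and the nonnegative quantity $\sum_{j=2}^n|c_{1j}|^2$. Once this dependence is isolated, the corollary follows by inspection.

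First I would check that both $C_1$ and $C_2$ satisfy the standing hypothesis of Theorem~\ref{uniondisk}: by assumption $c_{ij}^1 = c_{ij}^2 = 0$ for $i \geq 2$, so the theorem applies to each. This yields, for $k = 1,2$,
$$
W_{C_k}(A) = \left\{ c_{11}^k \langle Ah,h\rangle + y : \|h\|=1,\ |y|^2 \leq \big(\|Ah\|^2 - |\langle Ah,h\rangle|^2\big)\Big(\sum_{j=2}^n |c_{1j}^k|^2\Big)\right\}.
$$
I would then simply point out that the right-hand side sees $C_k$ only through the two data $c_{11}^k$ and $\sum_{j=2}^n|c_{1j}^k|^2$: the leading coefficient $c_{11}^k$ enters as the multiplier of $\langle Ah,h\rangle$, while the entries $c_{1j}^k$ for $j\geq 2$ enter only through the aggregate $\sum_{j=2}^n|c_{1j}^k|^2$ bounding the admissible $y$. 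Under the hypotheses $c_{11}^1 = c_{11}^2$ and $\sum_{j=2}^n|c_{1j}^1|^2 = \sum_{j=2}^n|c_{1j}^2|^2$, these two data coincide, so the two defining sets are literally identical and hence $W_{C_1}(A) = W_{C_2}(A)$.

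There is essentially no obstacle: all the work has been done in Theorem~\ref{uniondisk}, and the corollary records only that the phases and individual magnitudes of the off-diagonal entries $c_{1j}$ are invisible to the $C$-numerical range, with just their combined $\ell^2$-mass mattering. The one point worth emphasising is that \eqref{rank1Cfinal} is a genuine set equality rather than a mere containment; this is exactly what the theorem provides, and it is what forces equality of the two sets in both directions once the governing data agree.
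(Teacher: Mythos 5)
Your proof is correct and is exactly the intended deduction: the paper states this corollary without proof as an immediate consequence of Theorem~\ref{uniondisk}, and your observation that the description \eqref{rank1Cfinal} depends on $C$ only through $c_{11}$ and $\sum_{j=2}^n|c_{1j}|^2$ is precisely the point.
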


\end{subsection}

\end{section}

\begin{section}{Acknowledgements}
     The author is grateful to EPSRC for financial support (grant - EP/Y008375/1).
      \newline
      The author gratefully acknowledges the ILAS 2025 organising committee, whose efforts provided the opportunity for valuable discussions with participants at ILAS 2025 that contributed to the development of this paper.

    \vskip 0.3cm

    \noindent The author has no declarations of interest.
\end{section}

 \noindent    Author email address - r.d.oloughlin@reading.ac.uk

 \newpage
 \bibliographystyle{plain}
\bibliography{bibliography.bib}
\end{document}